\newcommand{\mycolor}{Navy}
\theoremstyle{plain}    
 \newtheorem{theorem}{Theorem}[section]
 \numberwithin{equation}{section} 
 \numberwithin{figure}{section} 
 \theoremstyle{plain}
 \theoremstyle{plain}    
 \theoremstyle{plain}    
 \newtheorem{proposition}[theorem]{Proposition} 
 \theoremstyle{plain}    
 \newtheorem{lemma}[theorem]{Lemma} 
 \theoremstyle{plain}
 \newtheorem{remark}[theorem]{Remark}
 \theoremstyle{plain}
\newtheorem{example}[theorem]{Example}
 \theoremstyle{plain}
 \newtheorem*{thmA}{Theorem A} 
 \newtheorem*{thmB}{Theorem B}
 \newtheorem*{ackn}{Acknowledgements}
\theoremstyle{definition}
\newtheorem{definition}[theorem]{Definition}
\newcommand{\K}{\mathcal{K}}
\newcommand{\na}{\nabla}
\newcommand{\N}{{\mathbb{N}}}
\newcommand{\R}{{\mathbb{R}}}
\newcommand{\M}{{\rm M}}
\renewcommand{\a}{\alpha}
\newcommand{\e}{\varepsilon}
\subjclass[2020]{35J96, 53C25, 58J05}
\keywords{Monge-Amp\`ere  equations,  Hessian manifolds}
\begin{document}

\title{Monge-Amp\`ere equations on compact Hessian manifolds}
\author{V. Guedj and  T. D. T\^o}


\address{Institut de Math\'ematiques de Toulouse   \\ Universit\'e de Toulouse \\
118 route de Narbonne \\
31400 Toulouse, France\\}

\email{\href{mailto:vincent.guedj@math.univ-toulouse.fr}{vincent.guedj@math.univ-toulouse.fr}}
\urladdr{\href{https://www.math.univ-toulouse.fr/~guedj}{https://www.math.univ-toulouse.fr/~guedj/}}

\address{Institut de Math\'ematiques de Jussieu-Paris Rive Gauche \\ Sorbonne Universit\'e \\ 4 place Jussieu \\  75005 Paris, France}

\email{\href{mailto:tat-dat.to@imj-prg.fr}{tat-dat.to@imj-prg.fr}}
\urladdr{\href{https://sites.google.com/site/totatdatmath/home}{https://sites.google.com/site/totatdatmath/home}}

\date{\today}

\begin{abstract}
We consider degenerate Monge-Amp\`ere equations  on compact Hessian manifolds.
We establish compactness properties of the set of normalized
quasi-convex functions and show local and global comparison principles
for twisted Monge-Amp\`ere operators. 
We then use the Perron method to solve Monge-Amp\`ere equations  whose RHS involves an arbitrary probability measure,
generalizing works of Cheng-Yau, Delano\"e, Caffarelli-Viaclovsky and Hultgren-\"Onnheim.
The intrinsic approach we develop should be useful in deriving similar results
on mildly singular Hessian varieties, in line with the Strominger-Yau-Zaslow conjecture.
\end{abstract}

\maketitle

\section*{Introduction}

An affine manifold $(M,\nabla)$ is a manifold possessing a flat affine connection $\nabla$. 
Equivalently we may define an affine manifold as a manifold possessing a
topological  atlas $(U_i,x^i)$
such that the transition functions $x^i \circ (x^j)^{-1}$ are affine maps.

A Hessian manifold  $(M,\nabla,g)$
is an affine manifold with a Riemannian metric $g$ 
which can be locally expressed as $g=\nabla d \phi$,
where $\phi$ is a (locally defined) smooth convex function.
Flat Riemannian manifolds provide examples of Hessian manifolds,
many more can be found in \cite{Sh07}.

Compact Hessian manifolds (with mild singularities) are an import class of affine manifolds, which  play a central role in 
the study of 
maximal degenerations of polarized  Calabi-Yau varieties $(X_t,g_t)$. Indeed
the Strominger-Yau-Zaslow conjecture \cite{SYZ96,GW00,LYZ05,KS06} predicts that 
(suitably rescaled) the metric spaces $(X_t,g_t)$
converge in the Gromov-Hausdorff sense to a singular Hessian manifold $M$
with  the limiting metric satisfying  a real Monge-Amp\`ere equation on the smooth locus of $M$.

Real Monge-Amp\`ere equations on Hessian manifolds with smooth data  have been studied  by Cheng-Yau \cite{CY82}, 
Delanoe \cite{De89} and  Caffarelli-Viaclovsky \cite{CV01}
(see also \cite{PT20} for a parabolic approach). 
Given $0<f \in {\mathcal C}^{\alpha}(M)$, they have shown that the equation
$$
\det (g+\nabla du)=c f(x) \det g
$$
admits a unique solution $(u,c)$, where $c$ is a positive constant and 
$u \in {\mathcal C}^{2,\alpha}(M)$ is a normalized $g$-convex function,
i.e. $u+\phi$ is convex in an affine chart where $g=\nabla d\phi$.
They have similarly shown the existence of a unique
$g$-convex function $u$ such that
$$
\det (g+\nabla du)= f(x) e^u \det g.
$$

\smallskip

The purpose of this note is to extend these results
to the case where the right hand side measure
$\mu=f \det g \, dx$ can be arbitrarily degenerate.
A similar study has been done by 
Hultgren-\"Onnheim in \cite{HO19},
by using the fact that $M=\Omega/\Gamma$ is the 
quotient of a convex subset of $\R^n$
by  a subgroup of affine transformations,
and by developping a variational approach.
We develop here a more intrinsic approach,
which one should be able to use
when $M$ has mild singularities, in
line with the SYZ conjecture; it
provides results of a different nature 
when $M$ is not special.

\smallskip

Recall that, for $s \in  \R$,
 a {\bf $s$-density} on $M$ is a section of a  line bundle whose transition functions are 
 $\left|\det\frac{\partial x_\alpha}{\partial x_\beta}\right|^{-s}$, where $x_\alpha=(x_\alpha^1,\ldots , x_\alpha^n)$ are local affine 
 coordinates.
 A 1-density is thus a generalization of the notion of a {\it volume form}.
If $\psi$ is a smooth function, then
$$
    \det\left(\frac{\partial^2 \psi }{\partial x^i_\beta \partial x^j_\beta }\right)= \left|\det\frac{\partial x_\alpha}{\partial x_\beta}\right| ^2 \det\left(\frac{\partial^2 \psi }{  \partial x^i_\alpha x^j_\alpha }\right).
$$
Thus  $\det(g_{ij}+u_{ij})$ 
is a  $2$-density  and 
$$
M_{\rho,g}[u]:=\rho \det(g_{ij}+u_{ij})dx^1\wedge \ldots\wedge dx^n 
$$
 is a globally well-defined measure on $M$ whenever $\rho$ is a (-1)-density
 and  $u$ is a ${\mathcal C}^2$ $g$-convex function. 
This is  the   Monge-Amp\`ere measure  of $u$ with respect to $\rho$.
 
 Using that any $g$-convex function can be uniformly approximated by smooth
 $g$-convex functions, we extend the definition of
 $M_{\rho,g}[u]$ to the set $\K(M,g)$ of all $g$-convex functions.
 Our first main result is then the following:

\begin{thmA}
Let $\mu$ be  a probability measure on $M$ and $\rho$ a $(-1)$-density.
There exists a unique $g$-convex function $u \in \K(M,g)$ such that
$$
M_{\rho, g}[u]=e^u \mu.
$$
\end{thmA}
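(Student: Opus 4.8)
The plan is to prove uniqueness by the comparison principle and existence by the Perron method, using as black boxes the global comparison principle for $M_{\rho,g}$, the compactness of normalized $g$-convex functions, and the classical local solvability of the Monge--Amp\`ere--Dirichlet problem.

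\emph{Uniqueness.} Suppose $u_1,u_2\in\K(M,g)$ both satisfy $M_{\rho,g}[u_i]=e^{u_i}\mu$, set $c:=\sup_M(u_1-u_2)$ and assume $c>0$. For $0<\e<c$ the global comparison principle applied to $u_1$ and $u_2+c$ on $E_\e:=\{u_1>u_2+c-\e\}$ yields $\int_{E_\e}e^{u_1}\,d\mu\le\int_{E_\e}e^{u_2}\,d\mu$, while on $E_\e$ one has $e^{u_1}>e^{c-\e}e^{u_2}$; since $c-\e>0$ this forces $\mu(E_\e)=0$, hence $\mu(\{u_1>u_2\})=0$. Then $M_{\rho,g}[u_1]=e^{u_1}\mu\le e^{u_2}\mu=M_{\rho,g}[u_2]$ as measures, and the domination principle gives $u_1\le u_2$ everywhere; by symmetry $u_1=u_2$.

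\emph{Existence.} I would work with the family of subsolutions $\mathcal S:=\{v\in\K(M,g):M_{\rho,g}[v]\ge e^v\mu\}$ and its upper envelope $u:=\sup_{v\in\mathcal S}v$. First, $\mathcal S\neq\emptyset$: take $v=v_0-N$, where $v_0\in\K(M,g)$ carries enough Monge--Amp\`ere mass near $\mathrm{supp}\,\mu$ (built from local solvability plus gluing) and $N$ is large. Second, $\mathcal S$ is uniformly bounded above: since the total mass $\int_M M_{\rho,g}[v]$ equals the fixed constant $V:=\int_M M_{\rho,g}[0]$ for every $g$-convex $v$, any $v\in\mathcal S$ satisfies $e^{\inf_M v}\le\int_M e^v\,d\mu\le V$, so $\inf_M v\le\log V$, and combining this with the uniform oscillation bound for $g$-convex functions provided by the compactness theorem gives $\sup_M v\le\log V+C_0$. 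Hence $u$ is a bounded function. Replacing $u$ by its upper semicontinuous regularization $u^*$: by Choquet's lemma $u^*$ is the regularized increasing limit of $w_k=\max(v_1,\dots,v_k)$ with $v_i\in\mathcal S$ (a maximum of subsolutions is a subsolution), so $u^*\in\K(M,g)$ by the compactness theorem, and by continuity of $M_{\rho,g}$ along increasing sequences $M_{\rho,g}[u^*]=\lim_k M_{\rho,g}[w_k]\ge\lim_k e^{w_k}\mu=e^u\mu$; the standard argument (using that $\{u<u^*\}$ is negligible for $M_{\rho,g}[u^*]$) upgrades this to $M_{\rho,g}[u^*]\ge e^{u^*}\mu$, so $u^*\in\mathcal S$ and $u^*=u$.

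It remains to show $u$ is a supersolution, by balayage: if $M_{\rho,g}[u]>e^u\mu$ quantitatively on some small coordinate ball $B\Subset M$, solve the local Dirichlet problem for $\tilde u$ on $B$ with $\tilde u=u$ on $\partial B$ and $M_{\rho,g}[\tilde u]=e^{\tilde u}\mu$ on $B$; the local comparison principle gives $\tilde u\ge u$ on $B$ (on $\{\tilde u<u\}\Subset B$ one would have $M_{\rho,g}[\tilde u]=e^{\tilde u}\mu<e^u\mu\le M_{\rho,g}[u]$), and gluing $\tilde u$ to $u$ produces an element of $\mathcal S$ strictly above $u$ somewhere, contradicting maximality. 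Hence $M_{\rho,g}[u]=e^u\mu$.

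\textbf{Main obstacle.} The two delicate points are the balayage step --- which needs local solvability of the \emph{degenerate, nonlinear} Dirichlet problem with the singular datum $\mu$, together with a comparison principle robust enough to survive gluing --- and the upper bound on $\mathcal S$, which genuinely rests on the intrinsic compactness theory for $g$-convex functions: for an arbitrary probability measure $\mu$ no naive $L^\infty$ estimate (e.g.\ via a maximum-point argument, which would require comparing $\mu$ to the reference density) is available, and it is the combination of mass conservation, the exponential nonlinearity $e^u$, and the oscillation bound that makes the envelope well defined. An alternative to pure Perron would be to solve the equation first for $\mu$ with bounded density (where constants lie in $\mathcal S$), then approximate a general $\mu$ weakly by such measures and pass to the limit using the same oscillation bound together with the stability of $M_{\rho,g}$ under uniform convergence of $g$-convex functions.
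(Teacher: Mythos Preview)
Your overall architecture is right, and the ``alternative'' you sketch at the end is in fact what the paper does; but the direct Perron argument you lead with has two genuine gaps. First, the claim that $\int_M M_{\rho,g}[v]$ equals a fixed constant $V$ for every $g$-convex $v$ is \emph{false} on a general compact Hessian manifold: mass conservation via Stokes only holds when $M$ is special. One has instead only uniform two-sided bounds $0<a\le\int_M M_{\rho,g}[v]\le b$ (Lemma~\ref{lem:masseMA}); your upper bound on $\mathcal S$ survives with $b$ in place of $V$, but the conceptual point matters. Second, nonemptiness of $\mathcal S$ for an \emph{arbitrary} probability measure $\mu$ is not established: ``$v_0$ carries enough Monge--Amp\`ere mass near $\mathrm{supp}\,\mu$'' is not a construction. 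For a single Dirac mass one can write down explicit subsolutions (Example~\ref{exa:dirac2}), and hence for finite atomic measures by convex combination; for general $\mu$ there is no such recipe, and together with the balayage difficulty you correctly flagged (local solvability of $M[\tilde u]=e^{\tilde u}\mu$ with singular $\mu$), the direct approach does not close.

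The paper therefore runs Perron \emph{only} for $\mu=\sum_{i=1}^p c_i\delta_{a_i}$, where both obstacles disappear (explicit subsolutions exist; balayage uses the classical Dirichlet problem for Borel right-hand sides on balls avoiding the atoms), and then approximates an arbitrary $\mu$ weakly by atomic measures, passing to the limit via the uniform Lipschitz bound (Lemmata~\ref{lem:c0} and~\ref{lem:lip}) and continuity of $M_{\rho,g}$ under uniform convergence. Your uniqueness argument is morally correct but both more delicate than needed (the set $E_\e$ need not lie in a single affine chart, so the local mass comparison requires care) and unnecessary: since both solutions satisfy $e^{-u_i}M_{\rho,g}[u_i]=\mu$, the global comparison principle (Theorem~\ref{thm:pcp}) gives $u_1\le u_2$ and $u_2\le u_1$ in one line.
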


The proof uses the Perron method, once an appropriate comparison principle
(Theorem \ref{thm:pcp}) has been established.
One can solve similarly various degenerate equations of the type
$M_{\rho, g}[u]=e^{F(u,x)} \mu$, under minimal assumptions on $F$.
We only consider here the case $F(u,x)=\e u$, where $\e>0$. Letting
$\e$ decrease to zero
we show convergence of the corresponding solutions $u_{\e}$,
 establishing our second main result:

\begin{thmB}
Let $\mu$ be  a probability measure on $M$ and $\rho$ a $(-1)$-density.
There exists a unique 
constant $c>0$ and a
$g$-convex function $u \in \K_0(M,g)$ such that
$$
M_{\rho, g}[u]=c \mu.
$$
\end{thmB}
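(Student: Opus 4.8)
The plan is to realize $(u,c)$ as a limit of the solutions of the twisted equations
\begin{equation}\label{eq:Beps}
M_{\rho,g}[u_\e]=e^{\e u_\e}\,\mu,\qquad 0<\e<1 .
\end{equation}
Since $t\mapsto e^{\e t}$ is increasing, the comparison principle of Theorem~\ref{thm:pcp} applies to \eqref{eq:Beps}, and the Perron method used to prove Theorem~A — now with the twisting term $F(u,x)=\e u$ — yields for each $\e$ a unique $u_\e\in\K(M,g)$ solving \eqref{eq:Beps}; the same principle gives the sub/supersolution comparison: if $w\in\K(M,g)$ satisfies $M_{\rho,g}[w]\ge e^{\e w}\mu$ (resp. $\le$), then $w\le u_\e$ (resp. $w\ge u_\e$). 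Set $M_\e:=\sup_M u_\e$ and $v_\e:=u_\e-M_\e\in\K_0(M,g)$, so that $M_{\rho,g}[v_\e]=M_{\rho,g}[u_\e]=e^{\e M_\e}e^{\e v_\e}\mu$. The whole difficulty is to control $e^{\e M_\e}$ uniformly as $\e\to0$; note that, in contrast with the compact K\"ahler setting, the total mass $\int_M M_{\rho,g}[u]$ does \emph{depend} on $u$, so $c$ cannot be read off from a cohomological identity.

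The a priori estimate I aim for is
\begin{equation}\label{eq:Bapriori}
\delta_0\le e^{\e M_\e}\le \Delta_0\qquad\text{for all }\e\in(0,1),
\end{equation}
with $0<\delta_0\le\Delta_0<\infty$ independent of $\e$, and I would obtain it by using Theorem~A itself as a barrier. Let $\psi\in\K(M,g)$ solve $M_{\rho,g}[\psi]=e^{\psi}\mu$; being, in affine charts, the difference of a convex function and a smooth one, $\psi$ is bounded, say $\ell:=\inf_M\psi$ and $s:=\sup_M\psi$. For $0<\e<1$ and $x\in M$ the number $(1-\e)\psi(x)$ lies between $0$ and $\psi(x)$, hence in $[-|\ell|,|s|]$; thus, pointwise,
$$
e^{\e\psi-|\ell|}\le e^{\psi}=e^{\e\psi}e^{(1-\e)\psi}\le e^{\e\psi+|s|}.
$$
Consequently $\psi-|\ell|/\e$ is a subsolution of \eqref{eq:Beps} and $\psi+|s|/\e$ a supersolution, so by comparison $\psi-|\ell|/\e\le u_\e\le\psi+|s|/\e$ on $M$. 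Taking suprema gives $s-|\ell|/\e\le M_\e\le s+|s|/\e$, whence \eqref{eq:Bapriori} with $\delta_0=e^{-|s|-|\ell|}$ and $\Delta_0=e^{2|s|}$.

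To pass to the limit, recall that $\K_0(M,g)$ is compact in $C^0(M)$ and that there is $C_0>0$ with $-C_0\le v\le0$ for every $v\in\K_0(M,g)$. Given $\e_k\downarrow0$, pick a subsequence along which $v_{\e_k}\to v$ uniformly, with $v\in\K_0(M,g)$, and (using \eqref{eq:Bapriori}) $e^{\e_k M_{\e_k}}\to c\in[\delta_0,\Delta_0]$. In
$$
M_{\rho,g}[v_{\e_k}]=e^{\e_k M_{\e_k}}\,e^{\e_k v_{\e_k}}\,\mu
$$
the right-hand side converges weakly to $c\,\mu$: indeed $e^{\e_k M_{\e_k}}\to c$ and $\|\e_k v_{\e_k}\|_{C^0}\le \e_k C_0\to0$, so $e^{\e_k v_{\e_k}}\to1$ uniformly. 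The left-hand side converges weakly to $M_{\rho,g}[v]$ by continuity of the Monge--Amp\`ere operator along uniformly convergent sequences of $g$-convex functions. Hence $M_{\rho,g}[v]=c\,\mu$, and $(u,c):=(v,c)$ solves the equation of Theorem~B.

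Finally, uniqueness. Let $(u',c')$ be any solution with $u'\in\K_0(M,g)$ and let $\iota'\le0$ be the essential infimum of $u'$ with respect to $\mu$. Arguing as above, $u'+\tfrac1\e\log c'$ is a subsolution of \eqref{eq:Beps} (because $u'\le0$ gives $c'\,\mu\ge c'e^{\e u'}\mu$) and $u'+\tfrac1\e\log c'-\iota'$ is a supersolution (because $u'\ge\iota'$ $\mu$-a.e. gives $c'\,\mu\le c'e^{\e(u'-\iota')}\mu$); comparison yields $u'+\tfrac1\e\log c'\le u_\e\le u'+\tfrac1\e\log c'-\iota'$, hence $\tfrac1\e\log c'\le M_\e\le \tfrac1\e\log c'-\iota'$, and letting $\e\to0$ along the subsequence above gives $c'\le c\le c'$, so $c'=c$. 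Thus the constant is unique, and equals $\lim_{\e\to0}e^{\e\sup_M u_\e}$. Uniqueness of $u$ in $\K_0(M,g)$ then follows from the comparison principle (Theorem~\ref{thm:pcp}), which entails that two $g$-convex functions with the same Monge--Amp\`ere measure differ by an additive constant, removed by the normalization. The step I expect to be the real obstacle is the a priori bound \eqref{eq:Bapriori}: compactness and the continuity of $M_{\rho,g}$ are routine, but since the total mass is not an invariant here, the size of $\sup_M u_\e$ can only be pinned down by comparison against the barrier $\psi$ supplied by Theorem~A.
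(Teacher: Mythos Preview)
Your existence argument and your uniqueness proof for $c$ are both correct, and both proceed by the same $\e$-approximation $M_{\rho,g}[u_\e]=e^{\e u_\e}\mu$ as the paper. The differences lie in how you obtain the a priori bound on $\e\sup_M u_\e$ and how you show $c$ is unique. The paper bounds $\e\sup_M u_\e$ using the uniform mass estimates $a\le\int_M M_{\rho,g}[u]\le b$ of Lemma~\ref{lem:masseMA} together with the concavity of the logarithm; you instead invoke Theorem~A to produce the barrier $\psi$ and use comparison. For uniqueness of $c$, the paper gives a direct local argument by contradiction based on Lemma~\ref{lem:mass_com}; you instead squeeze $\e M_\e$ between $\log c'$ and $\log c'-\e\iota'$ for any solution $(u',c')$, deducing that $e^{\e M_\e}\to c'$ and hence that all solutions share the same constant. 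Your route is a bit slicker in that it ties $c$ directly to the approximation scheme, at the price of relying on Theorem~A.

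There is, however, a genuine gap in your last sentence. Theorem~\ref{thm:pcp} does \emph{not} imply that two $g$-convex functions with the same Monge--Amp\`ere measure differ by an additive constant. If $M_{\rho,g}[u]=M_{\rho,g}[v]=c\mu$, the hypothesis $e^{-u}M_{\rho,g}[u]\ge e^{-v}M_{\rho,g}[v]$ of that theorem becomes $e^{-u}\ge e^{-v}$ $\mu$-a.e., i.e.\ $u\le v$ on the support of $\mu$, which is exactly what you would need to prove. The statement of Theorem~B only asserts uniqueness of the \emph{constant} $c$; the paper proves precisely that, and explicitly remarks afterwards that uniqueness of $u$ is more delicate---known for $C^2$ solutions via \cite{CV01} and established here only when $\mu$ is a single Dirac mass. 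You should drop that final claim.
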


These results allows one to prove that any $g$-convex function is the uniform limit of
smooth strictly $g$-convex functions, a result that can also be proved directly 
(see Proposition \ref{prop:reg})
by using convolutions and gluing techniques, the same way Demailly approximates
quasi-plurisubharmonic functions in \cite{Dem92}.

\smallskip

We study topological properties of subsets of $g$-convex functions in section \ref{sec:quasi-convex}
(see Lemmata \ref{lem:c0} and \ref{lem:lip}), define the twisted  Monge-Amp\`ere
operator $M_{\rho,g}$ in section \ref{sec:MA} and establish there the comparison principle
(Theorem \ref{thm:pcp}). We then prove Theorem A and Theorem B in section \ref{sec:resolution}.

\begin{ackn} 
We thank Jakob Hultgren for useful discussions.
 The authors are   partially supported by the ANR under the "PIA" program
ANR-11-LABX-0040 (research project HERMETIC), and by
  the CNRS through the IEA project PLUTOCHE.
\end{ackn}

\section{Quasi-convex functions on Hessian manifolds} \label{sec:quasi-convex}

\subsection{Hessian manifolds}

\begin{definition}
An {\it affine manifold} is a differentiable manifold $M$ equipped with a flat, torsion-free connection $\nabla$.
\end{definition}

It is known that  a manifold $M$ is affine if and only if $M$ admits an affine atlas $(U_i,x^i)$, i.e.
a topological atlas  such that transition functions $x^i \circ (x^j)^{-1}$ are  in the affine group 
${\rm Aff}(n,\R)=\{ \Phi: x \in \mathbb{R}^n \mapsto  Ax+b \in \mathbb{R}^n \}$.  

An affine manifold $(M,\nabla)$ is called {\it special } if it admits a volume form which is covariant constant with respect to the connection $\nabla$. 
Alternatively $M$ is special if and only if it admits an affine atlas with transition functions in $SL(n,\mathbb{R}) \times \R$. 

\smallskip

The notion of convex function $u:M \rightarrow \R$ on an affine manifold $M$ is well defined,
requiring that is is convex in "affine coordinates". However the only global convex functions
are the constants if $M$ is compact, as follows from the maximum principle.
We are therefore going to consider the softer notion of quasi-convex functions,
by allowing for a negative but smooth contribution from the Hessian of $u$.
We shall measure the latter by comparing it with a Hessian metric:

\smallskip

\begin{definition}
A Riemannian metric $g$ on an affine manifold $(M,\nabla)$ is called a {\it Hessian metric} if $g$ can be locally expressed by $g=\nabla d\phi  $.
In this case  $(M,\nabla, g)$ is called a {\it Hessian manifold}.
\end{definition}

If $(U_i,x^i)$ is an affine atlas the metric $g$ is Hessian if $g=\nabla d\phi_i$ in $U_i$,
where $\phi_i:U_i \rightarrow \R$ are smooth strictly convex functions
such that $\phi_i-\phi_j$ is affine in $U_i \cap U_j$.

\smallskip

By analogy with the concept of K\"ahler class, 
one makes the following:

\begin{definition}
 Two Hessian metrics $g=\nabla d\phi_i$ and $\tilde{g}=\nabla d \tilde{\phi}_i$ 
are in the same class if $\tilde{\phi}_i-\phi_i=\tilde{\phi}_j-\phi_j=u$ is independent of $i,j,$ hence defines
a global function $u:M \rightarrow \R$.
 \end{definition}

Such a function $u$ is then called $g$-convex:

\begin{definition}
A $g$-convex function on $M$ is a continuous function $u \in {\mathcal C}^0(M,\R)$ such that
$\phi_i+u$  is convex in any open set $U_i$ of $M$,  where $g=\na d \phi_i$ in $U_i$. 
\end{definition}

The definition does not depend on the choice of local potentials for $g$: 
if $g=\nabla d \tilde{\phi}_i$, then $\tilde{\phi}_i-\phi_i$ is affine hence
$u+\phi_i$ is convex if and only if so is $u+\tilde{\phi}_i$.

\begin{example}
Assume $M$ is compact.
If $u:M \rightarrow \R$ is smooth, it follows from the compactness of $M$ that
$\e u$ is $g$-convex for all $0 < \e \leq \e_0$, where $\e_0>0$ depends on the ${\mathcal C}^2$-norm of $u$.
We provide in Example \ref{exa:dirac2} examples of non smooth $g$-convex functions.
\end{example}

\begin{definition}
We let $\mathcal{K}(M,g)$ denote the set of all $g$-convex functions.
\end{definition}

In the sequel we endow $\mathcal{K}(M,g)$ with the ${\mathcal C}^0$-topology.
Basic operations on convex functions extend to $g$-convex ones:
\begin{itemize}
\item if $u,v$ are $g$-convex then so are $\max(u,v)$ and $\log[e^{u}+e^v]$;
\item a normalized sum of $g$-convex functions is $g$-convex;
\item a uniform limit of $g$-convex functions is $g$-convex.
\end{itemize}

Here is  recipy to cook up extra $g$-convex functions from known ones:

\begin{lemma}
 If $u$ is $g$-convex and $\chi$ is convex with $0 \leq \chi' \leq 1$ then $\chi \circ u$ is $g$-convex.
\end{lemma}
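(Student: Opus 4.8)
The plan is to reduce the claim to the corresponding fact about genuine convex functions, working in a single affine chart $U_i$ where $g = \nabla d\phi_i$ with $\phi_i$ smooth strictly convex. By the definition of $g$-convexity, $\psi := \phi_i + u$ is convex on $U_i$, and we must show that $\phi_i + \chi\circ u$ is convex on $U_i$. The difficulty is that $\chi\circ u = \chi(\psi - \phi_i)$ is a composition of a convex function with an affine-\emph{perturbation} of a convex function, so the usual ``convex $\circ$ convex, outer increasing'' lemma does not apply directly; we must exploit both the bound $\chi'\le 1$ (to control the part of the Hessian coming from $-\phi_i$) and the bound $\chi'\ge 0$ together with convexity of $\chi$ (to handle the part coming from $\psi$).

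First I would treat the smooth case: assume temporarily that $u$ (hence $\psi$) is $\mathcal C^2$. Write, in affine coordinates on $U_i$,
\[
\nabla d(\chi\circ u) = \chi''(u)\, du\otimes du + \chi'(u)\, \nabla d u,
\]
and since $\nabla d u = \nabla d\psi - \nabla d\phi_i$,
\[
\nabla d(\phi_i + \chi\circ u) = \chi'(u)\,\nabla d\psi + \big(1-\chi'(u)\big)\,\nabla d\phi_i + \chi''(u)\, du\otimes du.
\]
Each of the three terms on the right is positive semidefinite: $\chi'(u)\ge 0$ and $\nabla d\psi\ge 0$ since $\psi$ is convex; $1-\chi'(u)\ge 0$ and $\nabla d\phi_i = g > 0$; and $\chi''(u)\ge 0$ since $\chi$ is convex, while $du\otimes du\succeq 0$ always. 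Hence $\phi_i+\chi\circ u$ has nonnegative Hessian in affine coordinates, i.e.\ is convex, which is exactly $g$-convexity of $\chi\circ u$.

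It remains to remove the smoothness assumption. For a general $g$-convex $u$, I would approximate: in a slightly smaller chart $U_i'\Subset U_i$ regularize the convex function $\psi = \phi_i + u$ by mollification to get smooth convex $\psi_\e \downarrow \psi$, set $u_\e := \psi_\e - \phi_i$ (a smooth $g$-convex function on $U_i'$), apply the smooth case to conclude $\phi_i + \chi\circ u_\e$ is convex on $U_i'$, and let $\e\to 0$; here I use that $\chi$ is continuous (being convex and finite, with $0\le\chi'\le 1$ it is in fact $1$-Lipschitz), so $\chi\circ u_\e \to \chi\circ u$ locally uniformly, and a locally uniform limit of convex functions is convex. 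Alternatively, and perhaps more cleanly, one may invoke directly the elementary real-variable fact that if $\psi$ is convex, $h$ is convex and affine-perturbable in the precise sense above, then the stated combination is convex — but I expect the mollification route to be the shortest rigorous path. The main obstacle is purely the bookkeeping in the non-smooth case, namely checking that the mollified potentials stay $g$-convex on a fixed subchart and patch consistently; once the smooth Hessian computation above is in hand, everything else is routine, and since $g$-convexity is a local condition (it is tested chart by chart) the local argument suffices.
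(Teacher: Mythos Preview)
Your proof is correct and follows essentially the same route as the paper: both compute, in an affine chart with $g=\nabla d\phi$, the three-term decomposition
\[
\mathrm{Hess}(\phi+\chi\circ u)=\chi''(u)\,du\otimes du+\chi'(u)\,\mathrm{Hess}(u+\phi)+(1-\chi'(u))\,\mathrm{Hess}(\phi),
\]
observe that each summand is nonnegative under the hypotheses $\chi''\ge 0$ and $0\le\chi'\le 1$, and then pass from the smooth to the general case by local mollification (the paper phrases this last step as interpreting the computation distributionally or invoking its Proposition on regularization).
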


\begin{proof}
The assertion follows from an elementary computation which we provide for the convenience of the reader.
If $(x_1,\ldots,x_n)$ denote local affine coordinates in some chart $U$, where $g=\nabla d \phi$
with $\phi:U \rightarrow \R$ convex, we know that
$$
Hess(u+\phi):=\left( \frac{\partial^2 u}{\partial x_i \partial x_j}+\frac{\partial^2 \phi}{\partial x_i \partial x_j}  \right)
$$
and $Hess(\phi)$ are both non-negative. The function $v=\chi \circ u$ satisfies
$$
Hess(v+\phi)=
\chi'' \circ u \cdot \left(  \, \frac{\partial u}{\partial x_i}\frac{\partial u}{\partial x_j}  \right)
+\chi' \circ u \cdot  Hess(u+\phi)
+(1-\chi' \circ u) \cdot  Hess(\phi).
$$
Since the matrix $\left(  \, \frac{\partial u}{\partial x_i}\frac{\partial u}{\partial x_j}  \right)$ is non-negative,
the positivity of $Hess(v+\phi)$ follows from 
that of $Hess(\phi),Hess(u+\phi)$ and
the normalization $\chi'' \geq 0$ and $0 \leq \chi' \leq 1$.

One can interpret this computation in the sense of distributions, or alternatively
use convolutions in affine charts and proceed by approximation (see  Proposition \ref{prop:reg}).
\end{proof}

\subsection{Compactness properties of $g$-convex functions}

Let $(M, \nabla, g)$ be a compact Hessian manifold.
In the sequel, fixing a Hessian class $\{g\}$, we seek for 
$g$-convex functions that solve certain (degenerate)   Monge-Amp\`ere equations.

\smallskip

We shall use the Perron method and proceed by approximation, this requires to
establish good topological properties of families of $g$-convex functions.
Sup-normalized $g$-convex functions enjoy strong compactness properties:

\begin{lemma} \label{lem:c0}
The set 
$$
\mathcal{K}_0(M,g):=\{ u \in \mathcal{K}(M,g), \; \sup_M u=0 \}
$$
is compact. There exists $C_0 \in \R^+$ such that for all $u \in \mathcal{K}_0(M,g)$
and all $x \in X$,
$$
-C_0 \leq u(x) \leq 0.
$$
\end{lemma}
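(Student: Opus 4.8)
The plan is to reduce the statement to a classical local fact about convex functions, namely that a convex function on a bounded convex domain attains its maximum on the boundary and is bounded below once its boundary behaviour is controlled, and to globalize this using the compactness of $M$ and a finite cover by affine charts. First I would fix, once and for all, a finite atlas $(U_i, x^i)_{i=1}^N$ of affine charts such that $g = \nabla d\phi_i$ on $U_i$ with $\phi_i$ smooth strictly convex, and such that there are relatively compact open subsets $V_i \Subset U_i$ still covering $M$, with each $x^i(V_i)$ and $x^i(U_i)$ bounded convex (or at least contained in and containing bounded convex sets). On each $U_i$, for $u \in \mathcal{K}_0(M,g)$ the function $w_i := u + \phi_i$ is genuinely convex in the affine coordinate $x^i$. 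Since the $\phi_i$ are fixed smooth functions on the compact $M$, they are uniformly bounded, say $|\phi_i| \leq A$ on $U_i$, so controlling $u$ is equivalent to controlling the convex functions $w_i$ up to a fixed additive constant.

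For the upper bound I would simply note $\sup_M u = 0$ is imposed, so $u \leq 0$ everywhere. The real content is the lower bound $u \geq -C_0$. Here I would argue by the following mechanism: the transition functions between overlapping charts are affine, so a convex function on one chart, restricted to an overlap, is again convex there; combined with the sub-mean-value / line-segment property of convex functions, a lower bound for $u$ on a suitable "core" can be propagated across the finite chain of overlapping charts covering $M$. Concretely, since $\sup_M u = 0$, there is a point $p \in M$ with $u(p)$ close to $0$ (or equal to $0$, if one prefers to pass to a limit); $p$ lies in some $V_i$, and near $p$ the convex function $w_i$ is bounded below by an affine function whose value at $p$ is near $\phi_i(p) \geq -A$. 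Convexity forces $w_i$, hence $u$, to be bounded below on all of $V_i$ by a constant depending only on $\operatorname{diam} x^i(U_i)$, the gradient bound coming from the fact that $w_i \leq A$ on the larger set $U_i$ (a convex function bounded above on a ball of radius $r$ and from below at the center has a Lipschitz bound on a smaller ball). Then one moves to an adjacent chart $V_j$ meeting $V_i$, uses that $u$ is already bounded below on $V_i \cap V_j$, and repeats; after at most $N$ steps one covers $M$ and obtains a uniform lower bound $-C_0$ depending only on $N$, $A$, and the geometry of the fixed cover — crucially not on $u$.

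For compactness of $\mathcal{K}_0(M,g)$ in the ${\mathcal C}^0$-topology, I would combine the uniform two-sided bound just obtained with equicontinuity: on each $V_i$ the functions $w_i = u + \phi_i$ are convex and uniformly bounded on $U_i \supset V_i$, hence uniformly Lipschitz on $V_i$ with a constant independent of $u$ (standard interior Lipschitz estimate for convex functions), and since $\phi_i$ is fixed and smooth this gives a uniform Lipschitz bound for $u$ on $V_i$, hence on $M$ after taking the max over the finite cover. By Arzelà–Ascoli, any sequence in $\mathcal{K}_0(M,g)$ has a uniformly convergent subsequence; the limit is $g$-convex (a uniform limit of $g$-convex functions is $g$-convex, as recorded in the bulleted list above) and satisfies $\sup_M u = 0$ because uniform convergence preserves both the bound $u \leq 0$ and the fact that the supremum is attained (using the uniform lower bound to prevent the sup from escaping). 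Hence $\mathcal{K}_0(M,g)$ is closed and precompact, i.e. compact.

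The main obstacle is the lower bound, specifically making the chart-to-chart propagation argument clean: one must check that the interior Lipschitz/lower-bound constants for convex functions on the fixed chart domains are genuinely uniform over the class, and that the finite chain of overlaps indeed transfers the bound with only a controlled loss at each step. Everything else (the upper bound, equicontinuity, Arzelà–Ascoli, closedness under uniform limits) is routine once this is in place.
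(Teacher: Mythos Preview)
Your proposal is correct and closely related to, but not identical with, the paper's argument. Both proofs start from the point where the supremum is attained and propagate a lower bound across $M$ through a finite affine cover; the difference is in \emph{what} is propagated. The paper first uses the submean value inequality for convex functions to propagate a uniform local $L^1$ lower bound (it shows that the set of points near which $\int u\,dV$ stays bounded below is open, nonempty, and closed, hence all of $M$), and only afterwards converts this to the $L^\infty$ bound via a single line-segment convexity estimate near the minimum point. You instead propagate the pointwise lower bound directly, using that a convex function bounded above on $U_i$ and bounded below at one interior point is bounded below on any $V_i\Subset U_i$. Your route is slightly more elementary and skips the $L^1$ detour, at the price of having to arrange the cover so that overlap points stay uniformly interior (so the convexity constants do not blow up); the paper's route mirrors the standard submean-value argument for quasi-plurisubharmonic functions. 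The equicontinuity and Arzel\`a--Ascoli parts are the same in both approaches; the paper likewise isolates the uniform Lipschitz bound as a separate lemma.
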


This result  has been established by Hultgren-\"Onnheim in \cite[Proposition 3.4]{HO19} by 
using properties of the universal cover of $M$ and the fact that convex functions admit
a Taylor expansion at order two at almost every point.
We provide here a different and direct approach, that only relies on submean value inequalities.

\begin{proof}
The closedness of $\mathcal{K}_0(M,g)$ is clear, as convexity and $\sup$-normalization are both preserved under 
uniform convergence. We show herebelow that
functions in $\mathcal{K}_0(M,g)$ are uniformly bounded and use this information to establish, in Lemma \ref{lem:lip},
that they are uniformly Lipschitz. It follows therefore from Arzela-Ascoli theorem that 
$\mathcal{K}_0(M,g)$ is compact in ${\mathcal C}^0(M,\R)$.

\smallskip

Fix  $u \in \mathcal{K}_0(M,g)$ and fix
two coverings $\{ U_\a \}, \{ U'_\a \}$ of $M$ by open sets such that
\begin{itemize}
\item there exists affine coordinates $(x_i^{\a})$ in  $U_\a'$;
\item  $g$ admits a smooth convex potential $\rho_\a$ in $U_\a'$;
\item $U_\a$ is relatively compact in $U'_\a$, hence $\rho_\a$ is uniformly bounded in $U_\a$.
\end{itemize}  
Pick $B(a,R)$ an affine ball in $U_\a$ and $\ell$ and affine line passing through $a$. It intersects
$\partial B(a,R)$ in two points $b^+,b^-$. The  function $v_\a=u+\rho_\a$
is convex in $U_\a'$ hence
$$
v_\a(x) \leq  \frac{1}{2R} \int_{b^-}^{b^+} v_\a(t) dt
$$
Using spherical coordinates and letting $\ell$ vary, we obtain
$$
v_\a(x) \leq  \frac{1}{{\rm Vol}(B(x,R))} \int_{B(x,R)} v_\a dV.
$$
Using that $\rho_\a$ is uniformly bounded in $U_\a$, we infer that for all $B(x,R) \subset U_\a$,
\begin{equation} \label{eq:l1}
u(x) \leq  \frac{1}{{\rm Vol}(B(x,R))} \int_{B(x,R)} u dV+C_{\a},
\end{equation}
for some uniform constant $C_\a >0$ (independent of $u,x,R$).
This uniform lower bound is the key to more general uniform
$L^1$ and $L^{\infty}$-bounds.

\medskip

We first establish a uniform $L^{1}$-bound, i.e. we claim there exists 
$C_1>0$ such that for all $u \in \mathcal{K}_0(M,g)$
$$
-C_1 \leq \int_{M} u dV \leq 0.
$$
Reasoning by contradiction, we assume there exists
$u_k \in  \mathcal{K}_0(M,g)$ such that
$\int_M u_k dV \rightarrow -\infty$.
Extracting and relabelling, we can assume that
$\sup_M u_k=u_k(x_k)=0$ with $x_k \rightarrow a \in U_\a$, for some $\a$.
We denote by $G$ the set of those $x \in M$ such that there exists 
a neighborhood $W$ of $x$ and a constant $C_W$ such that
$\int_W u_k dV \geq -C_W$ for all $k$.
The set $G$ is open by definition. 
It is non empty as it contains the point a: indeed
$$
0=u_k(x_k) \leq  \frac{1}{{\rm Vol}(B(x_k,R)} \int_{B(x_k,R)} u_k dV+C_{\a}
\leq   \frac{1}{{\rm Vol}(B(a,2R)} \int_{B(a,R/2)} u_k dV+C_{\a},
$$
as follows from (\ref{eq:l1}), the inclusions $B(a,R/2) \subset B(x_k,R) \subset B(a,2R)$ and   $u_k \leq 0$.

We finally claim that $G$ is closed  reaching a contradiction since then $G=M$ by connectedness.
Indeed assume that $(a_j) \in G^{\N}$  converges to $b \in M$. 
Fix $\a$ such that $b \in U_\a$ and $R>0$ small enough so that $B(b,R) \subset U_\a$.
For $j_0$ large enough $a_{j_0} \in B(b,R/2)$ and we can find a neighborhood
$W$ of $a_{j_0}$ such that $\int_{W} u_k dV \geq -C_W$; in particular
$k \mapsto \sup_{B(b,R/2)} u_k$ remains bounded.
It follows therefore from (\ref{eq:l1}) again that $k \mapsto \int_{B(b,R/2)} u_k dV$ is bounded, hence $b \in G$, as claimed.

\medskip

We now establish a uniform $L^{\infty}$-bound.
It is a consequence of the previous $L^1$-bound together with the following observation:
there exists $A,B>0$ such that for 
all $u \in  \mathcal{K}(M,g)$, 
$$
A \frac{\int_M u dV}{{\rm vol}(M)}-B \leq \inf_M u.
$$
The latter can be obtained as follows: 
fix $u \in  \mathcal{K}_0(M,g)$ and $b$ such that $u(b)=\inf_M u$.
We fix $\a$ and $r>0$ (independent of $u$) such that
$b \in U_{\a}$ and $B(b,4r) \subset U_\a$.
Fix $a \in U_\a$ with $d(a,b)=r$ and $c \in B(a,r/2)$.  
The affine line joining $b$ to $c$ meets $\partial B(c,d(c,b))$
in a second point $b'$. It follows from the convexity of
$v_\a=u+\rho_\a$ that
$$
v_\a(c) \leq \frac{v_\a(b)+v_\a(b')}{2} \leq \frac{v_\a(b)}{2}+C_\a.
$$
Integrating over $c \in B(a,r/2)$ and using that $\rho_\a$ is uniformly bounded yields the conclusion.
\end{proof}

We now observe that normalized $g$-convex functions are uniformly Lipschitz:

\begin{lemma} \label{lem:lip}
There exists $C_1>0$ such that for all $u \in \mathcal{K}_0(M,g)$
and all $x,x' \in X$,
$$
|u(x)-u(x')| \leq C_1 d_g(x,x').
$$
\end{lemma}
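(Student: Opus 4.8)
The plan is to exploit the uniform $L^\infty$-bound established in Lemma~\ref{lem:c0}, namely $-C_0 \le u \le 0$ on $M$, together with the classical fact that a convex function on an affine chart is locally Lipschitz with a Lipschitz constant controlled by its oscillation on a slightly larger ball. First I would fix the finite coverings $\{U_\a\}$, $\{U'_\a\}$ of $M$ from the proof of Lemma~\ref{lem:c0}, so that each $U'_\a$ carries affine coordinates $(x_i^\a)$, $g$ has a smooth convex potential $\rho_\a$ on $U'_\a$ which is $\mathcal{C}^2$-bounded on $U_\a$, and $U_\a \Subset U'_\a$. Since $M$ is compact I may also assume each $U_\a$ contains an affine ball $B(p,3r_\a)$ around any of its points with $B(p,3r_\a) \subset U'_\a$, and that the $U_\a$ themselves cover $M$.

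The key step is the local estimate: on a fixed chart $U'_\a$, the function $v_\a = u + \rho_\a$ is convex, so on any ball $B(x, 2r) \subset U'_\a$ one has the standard convexity gradient bound
$$
|\nabla v_\a(y)| \le \frac{1}{r}\,\operatorname{osc}_{B(x,2r)} v_\a \le \frac{1}{r}\Bigl(\operatorname{osc}_{B(x,2r)} u + \operatorname{osc}_{B(x,2r)} \rho_\a\Bigr)
$$
for $y \in B(x,r)$, valid a.e.\ (or in the sense that $v_\a$ is Lipschitz on $B(x,r)$ with this constant). Now $\operatorname{osc}_{B(x,2r)} u \le C_0$ by Lemma~\ref{lem:c0}, and $\operatorname{osc}_{B(x,2r)} \rho_\a$ is bounded by the $\mathcal{C}^1$-norm of $\rho_\a$ on $U_\a$, which is a constant independent of $u$. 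Hence $v_\a$, and therefore $u = v_\a - \rho_\a$, is Lipschitz on $B(x,r)$ with a constant $L_\a$ depending only on $\a$, $r$, $C_0$ and $\rho_\a$ — not on $u$. Taking $L = \max_\a L_\a$ gives a uniform local Lipschitz constant: $|u(x) - u(x')| \le L\, d_g(x,x')$ whenever $x, x'$ lie in a common ball of the chosen type.

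To upgrade this to the global Riemannian estimate I would use a chain/connectedness argument: since $M$ is compact and connected, there is $\delta > 0$ such that any two points at $g$-distance $< \delta$ lie in one of the balls above, and any two points of $M$ can be joined by a $g$-geodesic which can be subdivided into at most $N = N(\delta, \operatorname{diam}_g M)$ segments each of $g$-length $< \delta$; comparing the affine and Riemannian lengths on the finitely many charts (equivalent up to a fixed constant) and summing the local estimates along the chain yields $|u(x) - u(x')| \le C_1\, d_g(x,x')$ for a uniform $C_1$. The main obstacle is purely bookkeeping: making the comparison between affine coordinate distance and the intrinsic distance $d_g$ uniform over the finitely many overlapping charts, and checking that the subdivision of a geodesic can always be carried out with each piece inside a single chart — both are routine consequences of compactness, and there is no analytic difficulty once the $L^\infty$-bound of Lemma~\ref{lem:c0} is in hand.
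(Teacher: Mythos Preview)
Your proposal is correct and follows essentially the same approach as the paper: both reduce to the classical fact that the Lipschitz constant of a convex function on an inner ball is controlled by its oscillation on a larger ball, and then feed in the uniform $L^\infty$-bound from Lemma~\ref{lem:c0} applied to $v_\a=u+\rho_\a$. The only cosmetic difference is the globalization: the paper dispenses with your geodesic chain argument by observing that if $x,x'$ do not lie in a common chart $U_\a$ then $d_g(x,x')$ is bounded below (Lebesgue number), so $|u(x)-u(x')|\le C_0$ already gives the Lipschitz bound directly.
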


Here $d_g$ denotes the Riemannian distance induced on $M$ by $g$.
We refer the reader to \cite[Proposition 3.5]{HO19} for a related result.

\begin{proof}
We use the same notations as in the proof of the previous lemma. 
The result is a simple consequence of the uniform bound if $x,x'$ do not belong to the same
chart $U_\a$, so it suffices to treat this case. 

Observing that 
$$
|u(x)-u(x')| \leq \left| [u+\rho_\a](x)-[u+\rho_\a](x') \right| +| \rho_\a(x) -\rho_\a(x')|,
$$
we are reduced to establishing an appropriate result for (euclidean) convex functions. 
The latter follows from the following property:
if $v:\R^n \rightarrow \R$ is a convex function in a ball $B(x_0,2r)$
such that $m \leq v \leq M$, then for all $x,x' \in B(x_0,\delta)$,
$$
|v(x)-v(x')| \leq \frac{M-m}{\delta} ||x-x'||.
$$
The proof of this fact is left to the reader.
\end{proof}

\subsection{Regularization of $g$-convex functions}

We fix here again $(M, \nabla, g)$ a compact Hessian manifold of dimension $n$.

\begin{proposition}\label{prop:reg}
Let $u\in \mathcal{K}(M,g)$, there is a   sequence $u_j \in \mathcal{K}(M,g) \cap {\mathcal C}^{\infty}(M)$ 
such that $u_j$ uniformly converges to $u$ as $j\rightarrow \infty$. 
\end{proposition}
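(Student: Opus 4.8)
The plan is to regularize $u$ chart by chart and glue the local smoothings using the $\max$-operation, following Demailly's technique for quasi-plurisubharmonic functions. First I would fix a finite affine atlas $(U_\a, x^\a)$ together with a slightly shrunk subatlas $(V_\a)$ still covering $M$, and local convex potentials $\rho_\a$ for $g$ on $U_\a$ with $\rho_\a - \rho_\b$ affine on overlaps. Writing $w_\a := u + \rho_\a$, each $w_\a$ is a genuine convex function on $U_\a$, so in the affine coordinates I can convolve with a standard mollifier $\chi_\e$ to get $w_{\a,\e} := w_\a * \chi_\e$, a smooth convex function on a slightly smaller open set, decreasing to $w_\a$ as $\e \downarrow 0$. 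Setting $u_{\a,\e} := w_{\a,\e} - \rho_\a$ gives a smooth $g$-convex function on that smaller set, and by uniform continuity of $u$ (Lemma \ref{lem:lip} type estimates, or just continuity on the compact $M$) the convergence $u_{\a,\e} \to u$ is uniform on $\overline{V_\a}$.

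The gluing is the delicate part. The naive $\max_\a u_{\a,\e}$ is $g$-convex but only continuous, so I would use Demailly's regularized maximum $M_\eta$ — a smooth convex function of several variables, nondecreasing and convex in each argument, with $M_\eta(t) = \max_j t_j$ when the largest $t_j$ exceeds the others by more than $\eta$ — applied to the family $(u_{\a,\e} + c_\a)_\a$ for suitable constants $c_\a$. Because $M_\eta$ is convex and $1$-Lipschitz with nonnegative partial derivatives summing to $1$, the composition $v_\e := M_\eta(u_{1,\e}+c_1, \ldots, u_{N,\e}+c_N)$ is again $g$-convex (this is exactly the stability under convex $1$-Lipschitz postcomposition and normalized convex combinations noted before Lemma in the excerpt); it is smooth provided that near every point the active index set is locally constant, i.e. exactly one term dominates. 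This forces the main technical work: choosing the constants $c_\a$ (depending on $\a$, and shrinking $\e$) so that on each $V_\a$ the function $u_{\a,\e}+c_\a$ strictly dominates all $u_{\b,\e}+c_\b$ for $\b$ with $V_\a \not\subset\subset U_\b$ near the boundary of the chart where $u_{\b,\e}$ might fail to be defined or to be controlled — this is the standard "partition via nested coverings" argument, using a third even finer covering and the uniform closeness $\|u_{\a,\e} - u\|_{L^\infty} < \delta$ to ensure the dominant term is always a legitimately defined smooth one.

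The hard part, then, is bookkeeping the coverings and constants so that $v_\e$ is globally well-defined and smooth on all of $M$, and simultaneously $\|v_\e - u\|_{L^\infty(M)} \to 0$. For the latter: since all $u_{\a,\e} + c_\a$ lie within $\max_\a|c_\a| + o(1)$ of $u$ and $M_\eta$ is sandwiched between $\max_j t_j$ and $\max_j t_j + \eta$, one gets $\|v_\e - u\| \le \max_\a |c_\a| + \eta + o(1)$; the catch is that the $c_\a$ cannot all be taken to zero (they encode the "which chart wins where" data), so one must instead arrange the $c_\a$ to be $O(\delta)$ by exploiting that $u$ itself varies by only $O(\mathrm{diam})$ on small charts, or simply rescale at the end. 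Concretely I would set up the three coverings, run the localization/domination lemma to fix $\eta=\eta(\e)$ and $c_\a = c_\a(\e)$ with $|c_\a| \le \omega(\e) \to 0$ where $\omega$ is a modulus of continuity of $u$, conclude $v_\e$ is smooth and $g$-convex on $M$, and let $\e \downarrow 0$. A final cosmetic remark: if one wants $u_j$ \emph{strictly} $g$-convex one adds a small multiple of a fixed smooth strictly $g$-convex function (which exists by the Example in the excerpt, applied to any nonconstant smooth function, or trivially since $\rho_\a$-type bumps can be patched), at the cost of another $o(1)$; this is how the remark after Theorem B is recovered from this Proposition.
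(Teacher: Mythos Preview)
Your overall strategy---mollify $u+\rho_\alpha$ in each affine chart and glue the local smoothings with a regularized maximum, \`a la Demailly---is exactly the paper's, and your remark that $M_\eta$ applied to $g$-convex arguments is again $g$-convex is correct.

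There is, however, a genuine gap in your gluing mechanism. Additive \emph{constants} $c_\alpha$ cannot produce the domination pattern you need. Since every local regularization satisfies $|u_{\alpha,\epsilon}-u|<\delta$ on its domain, on overlaps one has $|u_{\alpha,\epsilon}-u_{\beta,\epsilon}|<2\delta$, so once $|c_\alpha-c_\beta|>2\delta$ the sign of $(u_{\alpha,\epsilon}+c_\alpha)-(u_{\beta,\epsilon}+c_\beta)$ is determined by $c_\alpha-c_\beta$ alone, uniformly on the overlap. In particular the term with the largest $c_\alpha$ dominates \emph{everywhere} on $U_\alpha$, including in a full neighbourhood of $\partial U_\alpha$ from the inside; crossing $\partial U_\alpha$ the active index then jumps and the max fails even to be continuous. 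No choice of scalars can make $u_{\alpha,\epsilon}+c_\alpha$ win on $V_\alpha$ yet lose near $\partial U_\alpha$, which is precisely what global well-definedness and smoothness require. Your claim that the $c_\alpha$ can be taken of size $\omega(\epsilon)\to 0$ while still achieving this is therefore unfounded; the ``standard partition via nested coverings argument'' you cite does not use constants.

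The paper's fix is to replace the constants by spatially varying penalties: one chooses smooth $\eta_\alpha$ with $\eta_\alpha=0$ on $V_\alpha$ and $\eta_\alpha=-C_1$ off a compact of $U_\alpha$, and sets $w_\alpha^\epsilon=u_{\alpha,\epsilon}+B\epsilon\,\eta_\alpha$. The factor $B\epsilon$ is calibrated (via the paper's lemma on convolutions under affine coordinate changes) to beat the $O(\epsilon)$ discrepancy between $u_{\alpha,\epsilon}$ and $u_{\beta,\epsilon}$ on overlaps, forcing $w_\alpha^\epsilon$ out of the max near $\partial U_\alpha$. The cost is that $\eta_\alpha$ is not convex, only $D^2\eta_\alpha\ge -C_2 g$, so $w_\alpha^\epsilon$ is merely $(1+C_2B\epsilon)g$-convex; a final division by $1+C_2B\epsilon$ restores membership in $\mathcal K(M,g)$. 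Both the spatially varying penalty and this rescaling are missing from your sketch.
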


The proposition and its proof are inspired by Demailly’s regularization theorem \cite{Dem92}
(see also \cite{BK07})
for quasi-plurisubharmonic functions in complex geometry:
we use convolutions  in local charts,  affine  transitions  and gluing techniques  to construct global smooth $g$-convex approximants. 

\smallskip

We first recall the standard regularization by convolution. 
Let $\rho(x):=  \tilde \rho(|x|)\in C^\infty_0(\mathbb{R}^n)$ be a radial function with 
$\tilde \rho \geq 0, \rho(r)=0, \forall r\geq 1, \int_{\mathbb{R}^n } \rho d\lambda(x)=1$, where $d\lambda$ is the Lebesgue measure on $\mathbb{R}^n$. Set $\rho_\delta= \delta^{-n} \rho(x/\delta)$ 
and consider   
$$
 u_\delta(x)= \int_{\mathbb{R}^n} u(x-\delta w)\rho (w)d\lambda(w),
 $$
for $x \in \Omega' \Subset \Omega$ and $0\leq \delta\leq dist(\Omega',  \partial \Omega)$. If $u\in C^{0,\alpha}(\Omega)$, then $u_\delta\in C^\infty(\Omega')$ and 
$$\| u_\delta-u \|_{L^\infty(\Omega')}\leq \| u \|_{C^{0,\alpha}(\Omega)} \delta^\alpha.$$
In particular when $u$ is convex we can take $\alpha=1$.

\smallskip

We let $\max_\epsilon: \mathbb{R}^N\rightarrow \mathbb{R}$ denote the regularization 
 of the  $\max$ function,
\begin{equation}\label{eq:max_reg}
    \max{}_\epsilon(t_1,\ldots,t_N):=\int_{\mathbb{R}^N} \max (t_1+s_1, \ldots, t_N+s_N)\epsilon^{-N} \prod_{i=1}^N\gamma(s_i/\epsilon)ds_1\ldots ds_N,
\end{equation}
where $\gamma\in C^\infty(\mathbb{R}, \mathbb{R}^+)$ has compact support in $[-1,1]$ 
and is such that $\int_{\mathbb{R}}\gamma(s)ds=1$ with$\int_{\mathbb{R}}s\gamma(s)ds=0$.
 It follows from the definition that $\max_\epsilon$ is non-decreasing in all variables, smooth and convex on $\mathbb{R}^N$. 
 
 \smallskip

The following lemma is left to the reader:

\begin{lemma}\label{lem:trans}
Let $A: U\rightarrow U'$ be an affine map between two open subsets $U, U'$ of $\mathbb{R}^n$,
and let $u$ be a convex function on $U$. Then for any set $V\Subset U$ we can find a constant $\delta_V>0$ such that for any $\delta\in (0,\delta_V)$ the function $u^A:= (u\circ A^{-1})_\delta\circ A$ is 
convex and well defined 
in a neighborhood of $\overline V$ and there  is a constant $C_V>0$ such that 
$$ 
\|u^A_\delta -u\|_{L^\infty(V)}\leq C_V\| u\|_{C^{0,1}}\delta.
$$
\end{lemma}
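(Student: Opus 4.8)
The plan is to reduce the statement to the standard convolution estimate by absorbing the affine map $A$ into a change of variables, and then to track how the cutoff radius and the Lipschitz norm transform under this change. The key point is that $A$ being affine means $A(x)=Lx+b$ for an invertible linear map $L$, so convexity is preserved in both directions: $u$ convex on $U$ if and only if $u\circ A^{-1}$ convex on $U'$. This lets us apply the classical regularization to the convex function $\tilde u := u\circ A^{-1}$ on $U'$, obtaining $\tilde u_\delta\in \mathcal{C}^\infty$, convex, and defined on any $V'\Subset U'$ for $\delta < \mathrm{dist}(V',\partial U')$.

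First I would fix $V\Subset U$ and set $V':=A(V)\Subset U'$; choose $\delta_V>0$ so small that the $\delta$-neighborhood (in the $U'$-picture) of $V'$ is still relatively compact in $U'$ — concretely $\delta_V := \mathrm{dist}(V',\partial U')$, which is positive by relative compactness. Then for $\delta\in(0,\delta_V)$ the convolution $\tilde u_\delta$ is smooth and convex on a neighborhood of $\overline{V'}$, hence $u^A := \tilde u_\delta\circ A$ is smooth and convex on a neighborhood of $\overline V$, since precomposition with the affine iso $A$ preserves convexity and smoothness. This gives the first two claims.

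For the quantitative estimate I would use that $u$ convex on $U$ is locally Lipschitz on $U$, so $\tilde u = u\circ A^{-1}$ is Lipschitz on a neighborhood of $\overline{V'}$ with $\|\tilde u\|_{\mathcal{C}^{0,1}} \leq \|L^{-1}\|\,\|u\|_{\mathcal{C}^{0,1}}$ (operator norm of the linear part of $A^{-1}$, on the relevant neighborhood). Applying the standard convolution bound $\|\tilde u_\delta - \tilde u\|_{L^\infty(V')}\leq \|\tilde u\|_{\mathcal{C}^{0,1}(U')}\,\delta$, and then composing back with $A$ — which does not change $L^\infty$ norms since it is just a bijection of the domains, $\|u^A_\delta - u\|_{L^\infty(V)} = \|\tilde u_\delta - \tilde u\|_{L^\infty(V')}$ — yields
$$
\|u^A_\delta - u\|_{L^\infty(V)} \leq C_V\,\|u\|_{\mathcal{C}^{0,1}}\,\delta,
$$
with $C_V$ depending only on $V$, $U'$ and the linear part of $A$.

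The only mild subtlety — and what I would treat with a little care rather than as routine — is making sure all the norms are taken on the right sets: the Lipschitz constant of $u$ must be measured on a fixed relatively compact set containing the support of the convolution, not on all of $U$ (where $u$ need not be globally Lipschitz), so one should fix an intermediate open set $W$ with $V\Subset W\Subset U$ at the outset and understand $\|u\|_{\mathcal{C}^{0,1}}$ as $\|u\|_{\mathcal{C}^{0,1}(W)}$, shrinking $\delta_V$ further if needed so that all convolutions only see values of $u$ inside $W$. Apart from this bookkeeping the argument is a direct transport of the Euclidean statement through an affine diffeomorphism, so there is no genuine obstacle.
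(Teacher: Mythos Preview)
Your argument is correct and complete. The paper does not actually prove this lemma --- it explicitly states ``The following lemma is left to the reader'' --- so there is nothing to compare against; your write-up supplies exactly the routine verification the authors had in mind, namely transporting the standard convolution estimate $\|v_\delta - v\|_{L^\infty}\le \|v\|_{C^{0,1}}\delta$ through the affine diffeomorphism $A$ and noting that affine maps preserve convexity. Your remark about fixing an intermediate set $W$ with $V\Subset W\Subset U$ so that $\|u\|_{C^{0,1}}$ is taken on a set where $u$ is genuinely Lipschitz is the one point that deserves care, and you handle it correctly.
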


\smallskip

\begin{proof}[Proof of Proposition  \ref{prop:reg}]
Let $(U_i)_{i\in I}$ be a finite cover with local affine charts  of $M$  and choose another finite cover with local affine charts $(V_i)_{i\in I}$ of $M$ such that $V_i\Subset U_i$. For each $i\in I$ we can find a convex function $\phi_i$ in a neighborhood $W_i$ of $\overline U_i$ such that $g=D^2 \phi_i$ on $W_i$. Then the function
$
v_i:= \phi_i+ u 
$
is convex  on $U_i$. 

For any pair $(j,k)\in I^2$ such that $U_j\cap U_k\neq \emptyset$, we  have two   regularizations $v_{j, \epsilon}$ and $v_{k,\epsilon}$ of  the restriction $v_j|_{U_j\cap U_k}$ using convolutions on local charts $U_j$ and $U_k$ respectively.  Let $A$ be the affine change of coordinates on $U_j\cap U_k$ from $U_j$ to $U_k$.  Then we have on $U_j\cap U_k$
$$v_{j,\epsilon }-v_{k,\epsilon} = v_{j,\epsilon} -v^A_{j,\epsilon}+ (v_j-v_k)_\epsilon, $$
where $(v_j-v_k)_\epsilon$ is the regularization of $u_j-u_k$ using convolution on $U_k$. Using  Lemma \ref{lem:trans} and the fact that $v_j-v_k= \phi_j-\phi_k\in C^\infty$, we obtain
$$
\|v_{j,\epsilon }-v_{k,\epsilon} -(\phi_j-\phi_k) \|_{L^\infty}\leq B\epsilon \quad \text{on} \quad U_j\cap U_k.
$$

Fix $C_1>>1$. We define for each $i\in I$ a smooth function $\eta_i$ on $U_i$ such that $\eta_i=0 $ on $V_i$ and $\eta_i=-C_1$ away from a
compact subset if $U_i$.  Suppose that $D^2 \eta_j\geq - C_2 g$ for some $C_2>0$. We define the function
$$w_{j}^\epsilon = v_{j, \epsilon}-\phi_j+  B\epsilon \eta_j \quad \text{on} \quad U_j.$$

Then  
$\varphi_\epsilon (x) =  \max{}_{\epsilon} \{w^\epsilon_j(x): x\in U_j\}$
 is smooth and  $(1+C_2B\epsilon)g$-convex. 
We infer that  $u_\epsilon:=\varphi_\epsilon/( 1+C_2B\epsilon) \in \mathcal{K}(M,g)$ uniformly converges  to $u$ as $\epsilon\rightarrow 0$. 
\end{proof}

\section{Monge-Amp\`ere operators on compact Hessian manifolds} \label{sec:MA}

\subsection{Definition of Monge-Amp\`ere operators}

\subsubsection{Alexandrov definition}

Let $\Omega$ be an open subset of $\R^n$ and $u: \Omega\rightarrow \R$ be a  convex function. 
\begin{definition}
The subdifferential of $u$ is the set-valued function $\partial u: \Omega\rightarrow \mathcal{P}(\R^n)$ defined by
\begin{equation}
    \partial u(x_0) =\{p \in \R^n, \; u(x)\geq u(x_0)+p.(x-x_0), \forall x\in \Omega \}.
\end{equation}
Given $E\subset \Omega$, we define $\partial u(E)=\cup_{x\in E} \partial u(x)$. 
\end{definition}

It follows from a theorem of Alexandrov  that 
$$
\mathcal{S}:= \{E\subset \Omega| \,  \partial u(E) \text{ is Lebesgue measurable}\}
$$
 is a $\sigma$-algebra (cf. \cite[Chapter 1]{Gut}). This motivates the following:
 
\begin{definition}[Alexandrov]
The Monge Amp\`ere measure, $\M u $,  of a convex function on $\Omega$ is defined by
\begin{equation}
    \M u(E)= |\partial u(E)| 
\end{equation}
for any Borel set $E\subset \Omega$.
\end{definition}

Here $|B|$ denotes the Lebesgue measure of the Borel set $B$.
For smooth convex functions 
one can check that this definition yields
$$
 \M u(E)=\int_E \det D^2 u(x) dx
$$  
(see \cite[Example 1.1.4]{Gut}).

\begin{example} \label{exa:dirac1}
The convex function $u:x \in \R^n \mapsto |x-a| \in \R$ 
 is smooth  off the point $a$ and affine along lines through  $a$, thus 
 $\det D^2 u=0$ in $\R^n \setminus \{a\}$.
On the other hand $\partial u(a)=B(0,1)$, 
therefore $\M u=\delta_a$ is the Dirac mass at the point $a$.
\end{example}

We shall use the following basic results:

\begin{lemma}\label{lem:mass_com}\cite[Lemma 1.4.1]{Gut}
Let $\Omega\subset \mathbb{R}^n$ be a bounded open set and $u,v\in C(\bar \Omega)$.
 If $u=v$ on $\partial \Omega$ and $u\leq v$ in $\Omega$, then
$\M[v](\Omega)\leq \M [u](\Omega)$.  
\end{lemma}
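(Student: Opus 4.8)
The plan is to prove the comparison of masses $\M[v](\Omega) \leq \M[u](\Omega)$ when $u = v$ on $\partial\Omega$ and $u \leq v$ in $\Omega$. The key observation is that under these hypotheses, the image of the subdifferential of $v$ over $\Omega$ is contained in the image of the subdifferential of $u$ over $\Omega$; since the Monge-Amp\`ere measure is defined as the Lebesgue measure of the subdifferential image, the monotonicity of measures then gives the claim directly.

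So, more concretely, I would argue $\partial v(\Omega) \subseteq \partial u(\Omega)$. Fix $p \in \partial v(x_0)$ for some $x_0 \in \Omega$. Consider the affine function $\ell(x) = v(x_0) + p \cdot (x - x_0)$, which lies below $v$ on $\Omega$ and touches it at $x_0$. Since $u \leq v$ on $\Omega$ with equality on $\partial\Omega$, while $\ell \leq v$, one sees that near the boundary $\ell \leq v = u$ but it is not immediately clear $\ell \leq u$ everywhere. Instead, I would slide the plane: consider the family $\ell_c(x) = \ell(x) + c$ and let $c_0 = \sup\{c : \ell_c \leq u \text{ on } \Omega\}$. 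Because $u$ is bounded below (continuous on $\bar\Omega$) this supremum is finite, and $c_0 \geq $ some value ensuring $\ell_{c_0} \leq u$ on $\overline\Omega$ with a contact point $x_1$ where $\ell_{c_0}(x_1) = u(x_1)$. The contact point $x_1$ cannot lie on $\partial\Omega$: there $u = v \geq \ell > \ell_{c_0}$ (strictly, if $c_0 > 0$), so in fact $c_0 \leq 0$ and the contact point is interior (the case $c_0 = 0$ with boundary contact forces $\ell = \ell_{c_0}$ to already touch $u$ from below at that boundary point, which still yields $p \in \partial u$ of that point; one handles this by a limiting argument or by noting $v = u$ there). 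At an interior contact point $x_1$, $\ell_{c_0}$ supports $u$ from below, so $p = \nabla\ell_{c_0} \in \partial u(x_1) \subseteq \partial u(\Omega)$. Hence $\partial v(\Omega) \subseteq \partial u(\Omega)$, and taking Lebesgue measure yields $\M[v](\Omega) = |\partial v(\Omega)| \leq |\partial u(\Omega)| = \M[u](\Omega)$.

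The main obstacle is the care needed at the boundary: one must rule out (or properly treat) the sliding plane first achieving contact with $u$ at a boundary point rather than an interior one. The clean way around this is to note that if $c_0 \geq 0$ then on $\partial\Omega$ we have $\ell_{c_0} \geq \ell = v|_{\partial\Omega}$ with the supporting inequality $\ell \leq v$ on all of $\Omega$, which is only consistent if contact occurs where $v = u$; more robustly, one can shrink to a slightly smaller convex domain or perturb the plane's slope to push contact into the interior and pass to the limit. Since this is a standard lemma (it is Lemma 1.4.1 in Guti\'errez's book, as cited), I would simply give the sliding-plane argument in a few lines and refer the reader to \cite{Gut} for the routine boundary bookkeeping.
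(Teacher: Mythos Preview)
The paper does not prove this lemma; it simply cites \cite[Lemma~1.4.1]{Gut}. Your sliding-plane argument establishing $\partial v(\Omega)\subseteq\partial u(\Omega)$ is exactly the standard proof found in that reference, so the approach is correct and consistent with the paper's treatment. A minor cleanup: your boundary case split has a sign slip (you write ``if $c_0>0$'' where you have already forced $c_0\le 0$ via $u(x_0)\le v(x_0)=\ell(x_0)$); the correct dichotomy is that if $c_0<0$ then on $\partial\Omega$ one has $u=v\ge\ell>\ell+c_0=\ell_{c_0}$, forcing interior contact, while if $c_0=0$ then $x_0$ itself is an interior contact point---no perturbation or limiting argument is required.
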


\begin{lemma}\label{lem:convex}\cite[Lemma 1.4.7]{Gut}
If $u$ and $v$ are convex functions in $\Omega$, then
\begin{equation}
\M[u+v](E)\geq \M [u](E)+\M[v](E)
\end{equation}
for any Borel set $E\subset\Omega$.
\end{lemma}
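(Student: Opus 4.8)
The plan is to prove the inclusion of subdifferentials
\[
\partial u(x) + \partial v(x) \subset \partial (u+v)(x) \qquad \text{for every } x \in \Omega,
\]
which follows directly from the defining inequality: if $p \in \partial u(x)$ and $q \in \partial v(x)$, then $u(y) \ge u(x) + p\cdot(y-x)$ and $v(y) \ge v(x) + q\cdot(y-x)$ for all $y \in \Omega$, and adding these gives $(u+v)(y) \ge (u+v)(x) + (p+q)\cdot(y-x)$, i.e. $p+q \in \partial(u+v)(x)$. Summing over $x$ in a Borel set $E$ yields $\partial u(E) + \partial v(E) \subset \partial(u+v)(E)$, where the left side denotes the Minkowski-type sum $\bigcup_{x\in E}\bigl(\partial u(x)+\partial v(x)\bigr)$.

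Next I would reduce to the case of smooth (or piecewise linear) convex functions. Since any convex function on $\Omega$ is an increasing limit of smooth convex functions on slightly smaller domains (via the convolutions recalled in Section \ref{sec:quasi-convex}), and since the Monge-Amp\`ere measure is continuous under such approximation on compact subsets of $\Omega$ (a standard fact, cf.\ \cite[Chapter 1]{Gut}), it suffices to establish the inequality when $u$ and $v$ are smooth. In the smooth case $\M[u](E) = \int_E \det D^2u\,dx$, and the desired inequality becomes
\[
\int_E \det\bigl(D^2u + D^2v\bigr)\,dx \;\ge\; \int_E \det D^2u\,dx + \int_E \det D^2v\,dx,
\]
so it is enough to show the pointwise matrix inequality $\det(A+B) \ge \det A + \det B$ for symmetric positive semidefinite $n\times n$ matrices $A,B$. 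This is classical: diagonalizing $A$ and writing $A^{1/2}$, one has $\det(A+B) = \det A \cdot \det(I + A^{-1/2}BA^{-1/2})$ when $A>0$, and $\det(I+C) \ge 1 + \det C$ for $C \ge 0$ by expanding the characteristic polynomial (all intermediate symmetric-function terms are nonnegative); the case $A$ singular follows by a limiting argument $A \mapsto A + \eta I$.

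Alternatively — and this is the cleaner route that avoids the approximation step — I would argue directly from the Minkowski-sum inclusion together with the Brunn-Minkowski inequality. For fixed $x$, the sets $\partial u(x)$ and $\partial v(x)$ are convex, and $|\partial u(x) + \partial v(x)| \ge |\partial u(x)| + |\partial v(x)|$ would follow if one could sum Brunn-Minkowski pointwise; however the subtlety is that $\partial(u+v)(E)$ need not equal $\bigcup_{x\in E}(\partial u(x)+\partial v(x))$ as a disjoint-over-$x$ union, so one must be careful that the overlaps do not cause a loss. The honest resolution is: the inclusion $\partial u(E) + \partial v(E) \supset$ (something of the right measure) must be combined with the fact that for the Alexandrov measure it is the total image $\partial(u+v)(E)$ that is measured. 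I expect the main obstacle to be exactly this measure-theoretic bookkeeping — ensuring that passing from the pointwise subdifferential inclusion to a Lebesgue-measure inequality on the union is legitimate — and the smooth-approximation argument of the previous paragraph is the safest way to sidestep it, so that is the route I would write up in full.
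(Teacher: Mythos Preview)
The paper does not supply its own proof of this lemma; it is quoted from \cite[Lemma~1.4.7]{Gut} and used as a black box, so there is nothing in the paper itself to compare against.

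Your smooth-approximation route is correct and is a clean way to establish the statement. One small inaccuracy: mollification of a convex function by an even kernel produces a \emph{decreasing} family $u_\epsilon \searrow u$, not an increasing one, but this is immaterial --- what you actually use is locally uniform convergence, which gives weak convergence of the Monge--Amp\`ere measures. The pointwise matrix inequality $\det(A+B) \ge \det A + \det B$ for positive semidefinite $A,B$ then yields $\M[u_\epsilon+v_\epsilon] \ge \M[u_\epsilon]+\M[v_\epsilon]$ as an inequality of measures, and testing against nonnegative continuous compactly supported functions passes this to the limit, giving $\M[u+v] \ge \M[u]+\M[v]$ as Borel measures and hence the inequality on every Borel $E$. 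Your proof of the matrix inequality is fine.

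You were right to identify the measure-theoretic bookkeeping in the ``direct'' subdifferential approach as the genuine obstacle and to route around it via approximation. The argument in \cite{Gut} stays with Alexandrov's definition and handles precisely that bookkeeping (using that the set of subgradients lying in $\partial u(x)$ for more than one $x$ has Lebesgue measure zero), but your bypass is equally valid and arguably more in the spirit of the Rauch--Taylor viewpoint the paper adopts elsewhere.
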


\subsubsection{The Rauch-Taylor point of view}

In \cite{RT} the authors introduce an alternative  definition of the Monge-Amp\`ere measure:
for a smooth $u\in \K(\Omega)$ they observe that
$$
\mathcal{M}u:= du_1\wedge \cdots\wedge du_n=\det \left( \frac{\partial^2 u }{\partial x_i \partial x_j} \right)
dx_1 \wedge \cdots \wedge dx_n,
$$
setting $u_j:=\frac{\partial u }{\partial x_j}$. 
The  mass of the Monge-Amp\`ere measure $d u_1 \wedge \cdots\wedge du_n$ is controlled by a Chern-Levine-Nirenberg inequality:

\begin{lemma}\label{lem:CLN}
Let $\Omega$ be a subset in $ \R^n$ and fix $\Omega_1\Subset \Omega_2\Subset \Omega$  relatively compact open subsets.
 Let $u$ be a ${\mathcal C}^2$ convex function and $T=\phi dx^{k+1}\wedge \ldots\wedge dx^n$ for some positive continuous function $\phi$. There exists a constant $C=C_{\Omega_1,\Omega_2}>0$ such that  
$$
\int_{\Omega_1} d u_1\wedge \ldots \wedge d u_k \wedge T \leq C \| u\|^k_{L^\infty(E)} \|\phi\|_{L^\infty (\Omega_1)},
$$
where $E= (\Omega_2\setminus \Omega_1)\cap Supp(\phi)$ and $u_j$ denotes $\frac{\partial u}{\partial x^j}$. 
\end{lemma}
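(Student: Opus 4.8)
The plan is to prove the Chern--Levine--Nirenberg inequality by induction on $k$, peeling off one differential $du_i$ at a time via integration by parts, exactly as in the pluripotential-theoretic proofs in complex geometry. First I fix a cutoff function: since $\Omega_1 \Subset \Omega_2$, choose $\chi \in {\mathcal C}_0^\infty(\Omega_2)$ with $0 \le \chi \le 1$, $\chi \equiv 1$ on a neighbourhood of $\overline{\Omega_1}$, and $\|\nabla\chi\|_{L^\infty}, \|D^2\chi\|_{L^\infty}$ controlled by the geometry of the pair $(\Omega_1,\Omega_2)$. Then
$$
\int_{\Omega_1} du_1 \wedge \cdots \wedge du_k \wedge T \le \int_{\Omega_2} \chi \, du_1 \wedge \cdots \wedge du_k \wedge T,
$$
so it suffices to bound the right-hand side. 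I will establish by downward induction the statement that for each $0 \le j \le k$ there is a constant $C_j$ (depending only on $\Omega_1,\Omega_2$) so that the integral of $\chi \, du_1 \wedge \cdots \wedge du_k \wedge T$ is at most $C_j \|u\|^{k-j}_{L^\infty(E)}$ times an integral involving $j$ remaining factors $du_i$ paired against a form built from $\chi$, its derivatives, and $T$; at $j=0$ this is the claimed bound (the final factor being $\|u\|^{?}$ absorbed and $\|\phi\|_{L^\infty}$ coming from $T$, times the volume of $E$).

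The key step is the integration-by-parts move. Writing $du_1 = d(u_1)$ and using that $du_2\wedge\cdots\wedge du_k \wedge T$ is a closed form on the smooth locus (indeed $d(du_i) = 0$ and $dT = 0$ since $T$ has constant coefficients in the chosen affine coordinates — wait, $\phi$ is merely continuous, so instead one writes the degree count so that $du_2\wedge\cdots\wedge du_k\wedge T$ is already a top-degree form after wedging with one more $du_1$, and one integrates by parts only in the $\chi$ factor). Concretely,
$$
\int \chi \, du_1 \wedge (du_2 \wedge \cdots \wedge du_k \wedge T) = -\int u_1 \, d\chi \wedge du_2 \wedge \cdots \wedge du_k \wedge T + (\text{boundary term } = 0),
$$
the boundary term vanishing because $\chi$ is compactly supported in $\Omega_2$. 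Now $u_1 = \partial u/\partial x^1$ is only bounded in terms of $\|u\|_{{\mathcal C}^1}$, not $\|u\|_{L^\infty}$, so one must first replace $u$ by $u - \ell$ for a suitable affine function $\ell$ and exploit that a convex function's gradient on $\Omega_1$ is controlled by its oscillation on a slightly larger set $\Omega_2$: precisely, for convex $u$ and $x \in \Omega_1$, $|\nabla u(x)| \le \mathrm{dist}(\Omega_1,\partial\Omega_2)^{-1}\,\mathrm{osc}_{\Omega_2} u \le 2\,\mathrm{dist}(\cdot)^{-1}\|u\|_{L^\infty(\Omega_2)}$ after normalizing. This is where the gain of one power of $\|u\|_{L^\infty}$ per integration-by-parts step comes from, and it is the one place the convexity hypothesis is essential.

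Iterating, after $k$ steps all the $du_i$ are converted, picking up one factor $\|u\|_{L^\infty(E)}$ each and one factor $d\chi$ each (plus lower-order terms with $D^2\chi$), leaving
$$
\int \chi\, du_1\wedge\cdots\wedge du_k \wedge T \le C\,\|u\|^{k}_{L^\infty(E)} \int_{E} |{\textstyle\bigwedge}^{k} d\chi \wedge T| \le C'\,\|u\|^k_{L^\infty(E)}\,\|\phi\|_{L^\infty(\Omega_1)},
$$
where $C'$ absorbs $\|\nabla\chi\|^k_{L^\infty}$, $\|D^2\chi\|_{L^\infty}$, and $|E|$, all of which depend only on $\Omega_1,\Omega_2$. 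The main technical nuisance — not a genuine obstacle — is bookkeeping the lower-order terms generated when $d$ hits $\chi$ versus when one must differentiate a product $\chi\cdot(\text{powers of }d\chi)$: each such term has strictly fewer $du_i$'s and is handled by the inductive hypothesis with a larger power of $\|u\|_{L^\infty}$, so they are all dominated by the stated right-hand side (possibly enlarging $C$). A secondary point to handle carefully is that $T$ has a merely continuous coefficient $\phi$, so $T$ is not smooth and $dT$ is not defined; this is circumvented by never differentiating $T$ — the degree count ensures that in $\chi\,du_1\wedge\cdots\wedge du_k\wedge T$ the form $du_2\wedge\cdots\wedge du_k\wedge T$ has degree $n-1$, so integration by parts moves $d$ only onto $\chi$ and $u_1$, never onto $T$.
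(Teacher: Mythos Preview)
Your overall strategy—cutoff plus integration by parts, inducting on $k$—is the same as the paper's, but your execution diverges in one essential point and contains a genuine error in another.

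\smallskip

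\textbf{Different mechanism for trading $u_1$ for $u$.} After one integration by parts you have $-\int u_1\, d\chi \wedge (\text{rest})$ and you propose to bound $|u_1|$ by the convex gradient estimate $|\nabla u|\le C\,\mathrm{osc}(u)/\mathrm{dist}$. The paper instead integrates by parts a \emph{second} time, turning $u_1=\partial_1 u$ into $u$ itself and landing two derivatives on $\chi$:
\[
\int \chi\, du_1\wedge dx^2\wedge\cdots = -\int u_1\, d\chi\wedge dx^2\wedge\cdots = \int u\, d\chi_1\wedge dx^2\wedge\cdots,
\]
which yields the factor $\|u\|_{L^\infty(E)}$ directly, with the integrand supported on $\{\nabla\chi\neq 0\}\subset\Omega_2\setminus\Omega_1$. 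Your route via the gradient bound is legitimate in spirit but gives $\|u\|_{L^\infty}$ on a set strictly larger than $E$ (you need a buffer to control $\nabla u$ on $\mathrm{supp}\,d\chi$), so it does not recover the sharper $E$ in the statement.

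\smallskip

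\textbf{The $dT$ claim is false.} You assert that ``the degree count ensures \ldots\ integration by parts moves $d$ only onto $\chi$ and $u_1$, never onto $T$.'' This is not correct. With $\alpha=du_2\wedge\cdots\wedge du_k\wedge T$ of degree $n-1$, Stokes gives
\[
\int \chi\, du_1\wedge\alpha = -\int u_1\, d\chi\wedge\alpha - \int \chi u_1\, d\alpha,
\]
and $d\alpha = (-1)^{k-1} du_2\wedge\cdots\wedge du_k\wedge dT$ does involve $dT$, which is undefined for merely continuous $\phi$. The paper's fix is to first use convexity to observe
\[
du_1\wedge\cdots\wedge du_k\wedge dx^{k+1}\wedge\cdots\wedge dx^n=\det\bigl[(u_{ij})_{1\le i,j\le k}\bigr]\,dV\ge 0,
\]
pull out $\|\phi\|_{L^\infty(\Omega_1)}$ \emph{before} any integration by parts, and then work with the closed constant-coefficient form $dx^{k+1}\wedge\cdots\wedge dx^n$.

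\smallskip

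\textbf{The iteration is not set up correctly.} After one step you want to bound $\int |d\chi\wedge du_2\wedge\cdots\wedge du_k\wedge T|$, but this integrand has no sign, so you cannot simply insert a new cutoff and repeat. Moreover your endpoint expression $\bigwedge^k d\chi\wedge T$ vanishes identically for $k\ge 2$ since $d\chi\wedge d\chi=0$. The paper sidesteps this by declaring that induction reduces everything to $k=1$ (where the double integration by parts closes cleanly); making that reduction precise requires either a chain of nested cutoffs or the divergence structure of the Hessian minors, neither of which your write-up supplies.
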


We provide a  proof for the reader's convenience.

\begin{proof}
The argument is similar to  the (complex) Chern-Levine-Nirenberg inequality
(see \cite[Theorem 3.9]{GZbook}).
 By induction it suffices to prove the inequality for $k=1$.

Let $\chi $ be a non-negative smooth  function on $\Omega$ such that $\chi=1$ in $\Omega_1$. 
Since $u$ is a ${\mathcal C}^2$ convex function, we have
$$
d u_1\wedge \ldots \wedge d u_k \wedge dx^{k+1}\wedge \ldots\wedge dx^n = 
\det[(u_{ij})_{1\leq i,j\leq k}] dx^{1}\wedge \ldots\wedge dx^n \geq 0.
$$
For $T=\phi dx^{2}\wedge \ldots\wedge dx^n$ we thus get
$$
\int_{\Omega_1} du_1\wedge T \leq 
\|\phi\|_{L^\infty (\Omega_1)}\int_{\Omega_2} \chi d  u_1\wedge dx^2\wedge\ldots\wedge dx^n.
$$
Using Stokes theorem   we obtain
\begin{eqnarray*}
\int_{\Omega_2} \chi d u_1\wedge  dx^2\wedge\ldots\wedge dx^n &=& -\int_{\Omega_2}  u_1 d\chi \wedge  dx^2\wedge\ldots\wedge dx^n\\
&=&  \int_{\Omega_2} u d\chi_1 \wedge  dx^2\wedge\ldots\wedge dx^n\\
&=& \int_{\Omega_2\setminus \Omega_1} u d\chi_1 \wedge  dx^2\wedge\ldots\wedge dx^n.
\end{eqnarray*}
Fixing   $C>0$ such that $(\chi_{ij}) \leq C I_n$, we infer
$\int_{\Omega_1} du_1\wedge T\leq  C \| u\|_{L^\infty(E)} \|\phi\|_{L^\infty (\Omega_1)}$. 
\end{proof}

If $u_j$ are smooth convex functions uniformly converging to $u$, the measures 
$\mathcal{M}u_j$ have uniformly bounded masses thanks to Lemma \ref{lem:CLN}
and one can show that they converge towards a measure $\mathcal{M}u$
independent of the approximants.
Thus $\mathcal{M}u$ is well-defined for arbitrary convex functions
and one can check that the two definitions are equivalent:

\begin{proposition}
If $u $ is convex in $\Omega$ then $\M u = \mathcal{M} u$.
\end{proposition}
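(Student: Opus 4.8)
The plan is to establish the equality $\M u = \mathcal{M} u$ by first proving it for smooth convex functions and then passing to the limit. For a smooth convex function $u$ on $\Omega$, both quantities are classically equal to $\det D^2 u(x)\, dx$: the Alexandrov measure $\M u$ is given by the change-of-variables formula for the Gauss map $x \mapsto \nabla u(x)$ (this is \cite[Example 1.1.4]{Gut}, already cited in the excerpt), while $\mathcal{M}u = du_1 \wedge \cdots \wedge du_n = \det(\partial^2 u/\partial x_i\partial x_j)\, dx_1\wedge\cdots\wedge dx_n$ by direct computation. So the two agree on smooth convex functions.

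For the general case, I would fix an arbitrary convex function $u$ on $\Omega$ and a relatively compact open subset $\Omega' \Subset \Omega$, and take a sequence $u_j$ of smooth convex functions (e.g. the standard mollifications $u_{\delta_j}$ introduced just before Lemma \ref{lem:trans}) decreasing, or merely converging uniformly on $\overline{\Omega'}$, to $u$. On one hand, it is a standard fact (see \cite[Lemma 1.2.3]{Gut} or the weak continuity of the Alexandrov operator) that $\M u_j \to \M u$ weakly as measures on $\Omega'$. On the other hand, the discussion immediately preceding the Proposition shows that $\mathcal{M} u_j \to \mathcal{M} u$ weakly, with the limit independent of the chosen approximants, the masses being uniformly controlled by the Chern-Levine-Nirenberg inequality of Lemma \ref{lem:CLN}. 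Since $\M u_j = \mathcal{M} u_j$ for every $j$ by the smooth case, and weak limits of measures are unique, we conclude $\M u = \mathcal{M} u$ on $\Omega'$; letting $\Omega'$ exhaust $\Omega$ gives the claim on all of $\Omega$.

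The one point requiring a little care — and the main (mild) obstacle — is the weak convergence $\M u_j \to \M u$ for the Alexandrov measures: unlike the Rauch-Taylor side, where convergence is built into the very definition of $\mathcal{M}u$ for non-smooth $u$, here one must invoke the semicontinuity properties of subdifferentials. Concretely, if $u_j \to u$ locally uniformly with all functions convex, then $\limsup_j \partial u_j(K) \subset \partial u(K)$ for compact $K$ and $\partial u(U) \subset \liminf_j \partial u_j(U)$ for open $U$ (up to a Lebesgue-null set of overlaps, using that the set where $\partial u$ is multivalued has measure zero), which together with $|\partial u_j(\Omega')|$ bounded yields weak convergence of the measures. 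All of this is contained in \cite[Chapter 1]{Gut}, so I would simply cite it rather than reproduce it. With that in hand the proof is a two-line limiting argument built on the smooth identity and the uniqueness of weak limits.
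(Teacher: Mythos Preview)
Your argument is correct and is the standard one. Note, however, that the paper does not actually supply its own proof of this proposition: it simply refers the reader to \cite[Proposition~3.3]{RT}. Your sketch (smooth case via the change-of-variables identity, then approximation using the weak continuity of the Alexandrov measure from \cite[Lemma~1.2.3]{Gut} on one side and the very definition of $\mathcal{M}u$ on the other) is exactly the route taken in that reference, so there is nothing further to compare.
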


 We refer the reader to \cite[Proposition 3.3]{RT} for a proof.

\subsubsection{The compact case}

Let $(M, \nabla, g)$ be a compact Hessian manifold of dimension $n$.

Recall that, for $s \in  \R$,
 a {\bf $s$-density} on $M$ is a section of a  line bundle whose transition functions are $\left|\det\frac{\partial x_\alpha}{\partial x_\beta}\right|^{-s}$, where $x_\alpha=(x_\alpha^1,\ldots , x_\alpha^n)$ are local coordinates in some open set $U_\alpha$ of $M$. 
 A 1-density is thus a generalization of the notion of a {\it volume form}.

Observe that if  $x_\alpha=(x_\alpha^1,\ldots , x_\alpha^n)$ are   affine local coordinates with respect to $\nabla$
and $\psi$ is a smooth function,  then
$$
    \det\left(\frac{\partial^2 \psi }{\partial x^i_\beta \partial x^j_\beta }\right)= \left|\det\frac{\partial x_\alpha}{\partial x_\beta}\right| ^2 \det\left(\frac{\partial^2 \psi }{  \partial x^i_\alpha x^j_\alpha }\right).
$$
Thus  $\det(g_{ij}+u_{ij})$ 
is a  $2$-density 
and $\det(g_{ij}+u_{ij})dx^1\wedge \ldots\wedge dx^n $ is not a well-defined measure on $M$. 
 To obtain a good definition 
 we use a (-1)-density:

\begin{definition}
Let $\rho$ be an (-1)-density of $M$ and assume that $u$ is a ${\mathcal C}^2$ $g$-convex function. 
We define the (relative) Monge-Amp\`ere measure  of $u$ with respect to $\rho$  by
$$
M_{\rho,g}[u]:=\rho \det(g+u_{ij})dx^1\wedge \ldots\wedge dx^n.
$$
\end{definition}

\medskip

Observe that
$
   \rho_{\beta}  \det\left(g+\frac{\partial^2 u }{\partial x^i_\beta \partial x^j_\beta }\right)= 
 \rho_{\alpha}     \left|\det\frac{\partial x_\alpha}{\partial x_\beta}\right|  \det\left(g+\frac{\partial^2 u }{  \partial x^i_\alpha x^j_\alpha }\right),
$
thus the measure $M_{\rho,g}[u]$ does not depend on the choice of affine coordinates, 
it is  a globally well defined  Radon measure on $M$.
We simply denote this measure by $M_{g}[u]$ when $\rho=\det(g)^{-\frac{1}{2}}$.

\smallskip

If $u_j \in {\mathcal K}(M,g) \cap {\mathcal C}^2(M)$ uniformly converges to $u$, it follows from Lemma \ref{lem:CLN}
that the measures $M_{\rho,g}[u_j]$ have uniformly bounded masses
$\int_M M_{\rho,g}[u_j] \leq C_1$.
We can extend the definition of  the Monge-Amp\`ere measure to arbitrary $g$-convex functions
by approximation,   following the method of Rauch-Taylor \cite{RT}:

\begin{proposition} \label{pro:limunif}
Assume $u_j, v_j\in \mathcal{K}(M,g)\cap C^2(M)$   are such that 
$\lim_{j\rightarrow \infty} {u_j} = \lim_{j\rightarrow \infty} {v_j}=u\in \mathcal{K}(M,g)$,
$ M_{\rho,g}[u_j] \rightarrow \mu$ and 
$ M_{\rho,g}[v_j] \rightarrow \nu$  in the topology of  weak convergence of measures. Then $\mu=\nu$.
\end{proposition}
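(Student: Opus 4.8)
The plan is to reduce the assertion to the purely local fact, recalled just above and due to Rauch--Taylor, that a uniformly convergent sequence of smooth convex functions has weakly convergent Monge--Amp\`ere measures whose limit depends only on the limit function; the reduction is carried out by testing $\mu$ and $\nu$ against functions supported in affine charts.

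First I would fix a finite cover $(U_\alpha)$ of $M$ by affine coordinate patches, each relatively compact in a larger affine patch $U'_\alpha$ on which $g$ has a smooth convex potential $\phi_\alpha$ (i.e. $g=\nabla d\phi_\alpha$ on $U'_\alpha$), together with a partition of unity $(\theta_\alpha)$ subordinate to $(U_\alpha)$. Since $\mu$ and $\nu$ are Radon measures on the compact manifold $M$ and every $\chi\in C^0(M)$ is the finite sum of the functions $\theta_\alpha\chi\in C^0_c(U_\alpha)$, it suffices to show, for each fixed $\chi\in C^0_c(U_\alpha)$, that $\lim_j\int_M\chi\,M_{\rho,g}[u_j]=\lim_j\int_M\chi\,M_{\rho,g}[v_j]$.

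Fix such a $\chi$, let $\rho_\alpha>0$ be the continuous local representative of the $(-1)$-density $\rho$ in the affine coordinates on $U'_\alpha$, and set $\tilde u_j:=u_j+\phi_\alpha$, $\tilde v_j:=v_j+\phi_\alpha$, $\tilde u:=u+\phi_\alpha$. These are convex on $U'_\alpha$, the $\tilde u_j$, $\tilde v_j$ are smooth, and both sequences converge uniformly to $\tilde u$ on $U'_\alpha$ because $u_j,v_j\to u$ uniformly on $M$. By definition of $M_{\rho,g}$ and of the Rauch--Taylor form,
$$
\int_M\chi\,M_{\rho,g}[u_j]=\int_{U'_\alpha}\chi\,\rho_\alpha\,\det D^2\tilde u_j\,dx=\int_{U'_\alpha}\chi\,\rho_\alpha\,\mathcal{M}\tilde u_j ,
$$
and similarly with $v_j$. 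By the local Rauch--Taylor result we have $\mathcal{M}\tilde u_j\to\mathcal{M}\tilde u$ and $\mathcal{M}\tilde v_j\to\mathcal{M}\tilde u$ weakly on $U'_\alpha$ --- the \emph{same} limit, since $\tilde u_j$ and $\tilde v_j$ have the same uniform limit $\tilde u$. As $\chi\rho_\alpha$ is continuous with compact support in $U'_\alpha$, this gives $\lim_j\int_M\chi\,M_{\rho,g}[u_j]=\int_{U'_\alpha}\chi\,\rho_\alpha\,\mathcal{M}\tilde u=\lim_j\int_M\chi\,M_{\rho,g}[v_j]$. Summing over $\alpha$ yields $\mu=\nu$.

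Modulo this chart bookkeeping, the one genuinely non-formal input is the local statement invoked above. If one prefers to establish it from scratch (rather than quote \cite{RT}), the route is the classical Bedford--Taylor/Rauch--Taylor scheme: the uniform bound on the total masses is exactly Lemma \ref{lem:CLN}, and to identify the limit one interpolates along the (again convex) segment $P_t=(1-t)\tilde v_j+t\tilde u_j$ and uses the identity $\partial_t\det D^2P_t=\sum_{k,\ell}\partial_k\partial_\ell\big((\mathrm{Cof}\,D^2P_t)_{k\ell}(\tilde u_j-\tilde v_j)\big)$, which rests on the divergence-free (Piola) property of the cofactor matrix of a Hessian; two integrations by parts then bound $\int\chi(\mathcal{M}\tilde u_j-\mathcal{M}\tilde v_j)$ by $\|\tilde u_j-\tilde v_j\|_{L^\infty}$ times an integral of cofactors of $D^2P_t$, itself uniformly bounded by Lemma \ref{lem:CLN} (the cofactor matrix of a positive Hessian is positive semidefinite, so its entries are dominated by its diagonal, i.e.\ by principal minors). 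I expect the careful justification of these integrations by parts for merely $C^2$ convex data, together with the uniform control of the off-diagonal cofactors, to be the technical heart of a self-contained argument; the globalization itself is routine.
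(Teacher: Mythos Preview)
The paper does not actually supply a proof of this proposition; it is stated immediately after the discussion of the local Rauch--Taylor construction and the Chern--Levine--Nirenberg bound (Lemma~\ref{lem:CLN}), with the local weak-continuity statement for $\mathcal{M}$ taken from \cite{RT}. Your argument---reducing to affine charts via a partition of unity and then invoking the local Rauch--Taylor weak continuity of the Monge--Amp\`ere measure under uniform convergence of convex functions---is precisely the intended globalization and is correct. The optional self-contained sketch you give for the local statement (interpolation along $(1-t)\tilde v_j+t\tilde u_j$ and the Piola identity for the cofactor matrix) is one standard route; the paper simply cites \cite{RT} for this step.
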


Using Proposition \ref{prop:reg} we can thus set:

\begin{definition}
If $u$ is an arbitrary $g$-convex function, we  define
 $$
 M_{\rho,g}[u]=\lim_{j \rightarrow +\infty} M_{\rho,g}[u_j],
 $$ 
where $u_j$ is {\it any}  sequence of smooth $g$-convex functions converging to $u$.
\end{definition}

\begin{example} \label{exa:dirac2}
Let $x$ be  affine coordinates near $a \in M$
and $\chi$ a  test function such that $\chi \equiv 1$ near $a$
with compact support in this  chart.
Then $u:x  \in M \mapsto \chi(x) |x-a| \in \R$ is smooth in $M \setminus \{a\}$
and convex near $a$, so $\e u$ is $g$-convex if $0<\e$ is small enough.

The Monge-Amp\`ere measure $M_{\rho,g}[\e u]$ satisfies
$M_{\rho,g}[\e u] \geq C \e^n \delta_a=e^{u(a)+n \log \e+\log C} \delta_a$,
so the $g$-convex function $v=\e u+n \log \e+\log C$  satisfies
$M_{\rho,g}[v] \geq e^v \delta_a$: one says that
$v$ is a subsolution to the equation $M_{\rho,g}[w] =e^w \delta_a$.
Taking finite convex combination of such functions and shifting by an additive constant, one can construct similarly
$g$-convex subsolutions to the equation
$M_{\rho,g}[w] =e^w \mu$, where
$\mu=\sum_{i=1}^p c_i \delta_{a_i}$.
\end{example}

It follows from Stokes theorem that the Monge-Amp\`ere measures $M_{\rho,g}(u)$ all have the same total mass
if the manifold $M$ is special. This is no longer the case on an arbitrary
 Hessian manifold, 
but we nevertheless have uniform bounds :

\begin{lemma} \label{lem:masseMA}
There exist constants $0<a \leq b <+\infty$ such that  
$$
0<a \leq \inf_{u \in {\mathcal K}(M,g)} \int_M M_{\rho,g}(u) \leq \sup_{u \in {\mathcal K}(M,g)} \int_M M_{\rho,g}(u)  \leq b.
$$
\end{lemma}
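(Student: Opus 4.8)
The plan is to establish the two-sided bound by working in the finite cover by affine charts introduced in the proof of Lemma~\ref{lem:c0}, where the Monge--Amp\`ere mass decomposes into local contributions that we control by the Chern--Levine--Nirenberg inequality (Lemma~\ref{lem:CLN}) from above, and by a local comparison with a fixed euclidean convex model from below. Throughout, $\K(M,g)$ can be replaced by $\K_0(M,g)$: indeed $M_{\rho,g}[u]$ is unchanged if we add a constant to $u$, so every $u\in\K(M,g)$ has the same Monge--Amp\`ere measure as some $u_0\in\K_0(M,g)$, and by Lemma~\ref{lem:c0} the family $\K_0(M,g)$ is uniformly bounded, say $-C_0\le u_0\le 0$, and by Proposition~\ref{prop:reg} it suffices to treat smooth $g$-convex functions and pass to the limit.

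For the upper bound $b$: fix coverings $U_\alpha\Subset U'_\alpha\Subset U''_\alpha$ of $M$ by affine charts on which $g=\nabla d\rho_\alpha$ with $\rho_\alpha$ smooth convex and bounded, and a partition of unity $\{\theta_\alpha\}$ subordinate to $\{U_\alpha\}$. Then $\int_M M_{\rho,g}[u]=\sum_\alpha\int_{U_\alpha}\theta_\alpha\,\rho\det(g+u_{ij})\,dx$. On each chart the integrand is a bounded density times $\det D^2(u+\rho_\alpha)$, i.e.\ (up to the bounded factor $\rho$) the local euclidean Monge--Amp\`ere measure $\mathcal M(u+\rho_\alpha)$; since $\|u+\rho_\alpha\|_{L^\infty(U'_\alpha)}$ is bounded by $C_0+\sup_{U'_\alpha}|\rho_\alpha|=:C'_\alpha$ uniformly in $u\in\K_0(M,g)$, Lemma~\ref{lem:CLN} (with $k=n$, $T=\phi\,$, $\phi=\rho\cdot\|$bounded test$\|$) gives $\int_{U_\alpha}\mathcal M(u+\rho_\alpha)\le C_\alpha(C'_\alpha)^n$. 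Summing over the finitely many $\alpha$ yields a uniform $b$.

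For the lower bound $a$: it suffices to produce one chart where the mass cannot collapse. Fix a chart $U_\alpha$ with $B(0,4r)\subset U_\alpha$ in affine coordinates, $g=\nabla d\rho_\alpha$. Given $u\in\K_0(M,g)$, set $v=u+\rho_\alpha$, a convex function on $U_\alpha$. By the uniform Lipschitz bound (Lemma~\ref{lem:lip}) together with $-C_0\le u\le0$, the oscillation of $v$ on $B(0,2r)$ is controlled; one now uses the standard euclidean fact (Alexandrov estimate, or Lemma~\ref{lem:mass_com}) that a convex function on $B(0,2r)$ which attains its minimum on $B(0,r)$, or more robustly: $\mathcal M(v)\big(B(0,2r)\big)\ge c_{n,r}\big(\mathrm{osc}_{B(0,r)}v\big)^n$ is \emph{not} automatic, so instead I would argue from below by comparison. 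The cleaner route is: since $\sup_M u=0$ is attained at some point $x_u$, translating if necessary we may assume infinitely many of the relevant charts see a point where $u$ is close to $0$ and a point where $u\le-a_0<0$ — but this can fail in a single fixed chart. The honest fix is a \emph{global} argument: the total mass $\int_M M_{\rho,g}[u]$ is a continuous function of $u$ on the compact set $\K_0(M,g)$ (continuity under uniform convergence follows from Proposition~\ref{pro:limunif} and weak convergence of $M_{\rho,g}[u_j]$, whose masses are uniformly bounded by the upper bound just proved), and it is strictly positive for each fixed $u$ because $M_{\rho,g}[u]$ is a nonzero positive measure (if $u$ is smooth and $g$-convex then $g+u_{ij}>0$ somewhere — indeed $\det(g+u_{ij})$ cannot vanish identically since integrating $\Delta_{\mathrm{aff}}$-type quantities would force $u$ constant on a special chart, or simply: $g+u_{ij}\ge$ a nonzero positive matrix at a maximum point of $u$ in a chart). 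A continuous strictly positive function on a nonempty compact set attains a positive minimum $a>0$. This compactness-plus-positivity argument is the main obstacle: the subtle point is verifying that $u\mapsto\int_M M_{\rho,g}[u]$ is genuinely continuous on $\K_0(M,g)$ in the $\mathcal C^0$-topology (weak limits of probability-normalized measures can lose mass only by escaping to infinity, which cannot happen on compact $M$, so mass is preserved and continuity holds), and that the mass is never zero — for which one invokes that a $g$-convex function with $M_{\rho,g}[u]\equiv0$ would be, in each chart, a convex function with vanishing euclidean Monge--Amp\`ere measure of $u+\rho_\alpha$, i.e.\ $u+\rho_\alpha$ affine, forcing $g=\nabla d(-u)$ locally with $-u$ smooth; patching these shows $M$ special with $g$ in the zero class relative to $-u$, contradicting that $g$ is a genuine positive-definite metric unless one is careful — the safe statement is simply that $\int_M M_{\rho,g}[u]>0$ for every $u$, which follows because $\det(g+u_{ij})>0$ on a nonempty open set (at any interior maximum of $u$ in a chart, $u_{ij}\ge0$, hence $g+u_{ij}\ge g>0$ there).
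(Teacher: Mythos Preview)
Your overall strategy matches the paper's: reduce to $\K_0(M,g)$, use that $u\mapsto\int_M M_{\rho,g}[u]$ is continuous on this compact set (for the upper bound you give an explicit CLN argument, which is fine and slightly more concrete than what the paper writes), and then conclude the lower bound from strict positivity of the mass at every $u$.

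The gap is in the positivity step. First there is a sign slip: at an interior \emph{maximum} of $u$ one has $u_{ij}\le 0$, not $u_{ij}\ge 0$, so $g+u_{ij}\le g$ and nothing follows; you mean the interior \emph{minimum}. Second, and more seriously, even after this correction the inequality $u_{ij}\ge 0$ is only available when $u$ is $C^2$. For a general $g$-convex $u$ there is no pointwise Hessian, and your continuity/density argument does not close the gap: positivity of $m(u)$ on the dense set of smooth $u$ does not propagate to the closure without a \emph{uniform} lower bound, which is exactly what you are trying to establish. The paper avoids this by invoking the global comparison principle (Theorem~\ref{thm:pcp}): if $M_{\rho,g}[u]=0$ then $e^{-u}M_{\rho,g}[u]=0\le e^{-A}M_{\rho,g}[A]$ for every constant $A$, hence $u\ge A$ for all $A$, a contradiction. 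If you prefer to keep the argument self-contained, your minimum-point idea can in fact be salvaged for non-smooth $u$: at the global minimum $p$ one has, in a chart, $v:=u+\rho_\alpha\ge u(p)+\rho_\alpha$ with equality at $p$; strict convexity of $\rho_\alpha$ yields $(v-L)(x)\ge\frac{\lambda}{2}|x-p|^2$ for a suitable affine $L$ and some $\lambda>0$, and then a direct subdifferential argument shows $\partial(v-L)(B(p,r))\supseteq B(0,\lambda r/2)$, hence $M_{\rho,g}[u]$ has positive mass near $p$.
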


\begin{proof}
Since $M_{\rho,g}(u)=M_{\rho,g}(u-\sup_M u)$, we can consider the infimum and the supremum over the set 
${\mathcal K}_0(M,g)$ which is a compact subset of ${\mathcal C}^0(M,\R)$ (see Lemma \ref{lem:lip}).

Since the map $u \mapsto m(u)=\int_M M_{\rho,g}(u)$ is continuous and takes finite values,
 it suffices to check that $m(u)>0$ for all $u \in {\mathcal K}(M,g)$.
 This is an easy consequence of Theorem \ref{thm:pcp} below:
 if $M_{\rho,g}(u)=0$ then for any constant function $A \in {\mathcal K}(M,g)$, one gets
 $$
0= e^{-u} M_{\rho,g}(u) \leq e^{-A}M_{\rho,g}(A),
 $$
 hence $u \geq A$, which leads to a contradiction as soon as $A>\sup_M u$.
\end{proof}

\subsection{Comparison principles}

We first establish a local comparison principle.

\begin{lemma}\label{lem:local_com}
Let $u$,  $v$ be two convex functions on $\Omega$ such that 
\begin{equation}\label{eq:cond}
e^{-u}\M[u]\geq e^{-v} \M[v].
\end{equation}
and 
$u-v$ achieves a local  strict maximum at $x_0\in \Omega$. 
Then $u(x_0)\leq v(x_0)$. 
\end{lemma}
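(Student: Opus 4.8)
The plan is to argue by contradiction: suppose $u(x_0) > v(x_0)$, i.e. the strict local maximum value $m := (u-v)(x_0)$ is positive. After shrinking $\Omega$ to a small ball $B = B(x_0,r)$ on whose closure $u-v < m$ holds on $\partial B$, I would introduce the perturbed comparison function $v_\epsilon := v + m - \epsilon$ for small $\epsilon > 0$. On $\partial B$ we still have $u < v_\epsilon$ (since $u - v < m - \epsilon$ there for $\epsilon$ small), while at $x_0$ we have $u(x_0) = v(x_0) + m > v_\epsilon(x_0)$. Hence the open set $U := \{x \in B : u(x) > v_\epsilon(x)\}$ is a nonempty open set, relatively compact in $B$, with $u = v_\epsilon$ on $\partial U$ and $u > v_\epsilon$ inside — this is the configuration required by Lemma \ref{lem:mass_com}.

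Next I would apply Lemma \ref{lem:mass_com} on $U$ with the pair $(v_\epsilon, u)$: since $v_\epsilon = u$ on $\partial U$ and $v_\epsilon \leq u$ in $U$, we get $\M[u](U) \leq \M[v_\epsilon](U) = \M[v](U)$ (note $\M$ is translation-invariant, $\M[v_\epsilon] = \M[v]$). On the other hand, on $U$ we have $u > v_\epsilon = v + m - \epsilon$, so pointwise $e^{-u} < e^{-(v + m - \epsilon)} = e^{-(m-\epsilon)} e^{-v}$. Combining this with the hypothesis $e^{-v}\M[v] \leq e^{-u}\M[u]$ and integrating the measures over $U$, I would derive an inequality forcing $\M[v](U)$ to satisfy a strict contraction, e.g.
$$
\int_U e^{-v}\,d\M[v] \;\leq\; \int_U e^{-u}\,d\M[u] \;\leq\; \sup_U e^{-u}\cdot \M[u](U) \;\leq\; e^{-(m-\epsilon)}\,\Big(\sup_U e^{-v}\Big)\,\M[v](U),
$$
while also $\int_U e^{-v}\,d\M[v] \geq (\inf_U e^{-v})\,\M[v](U)$. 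Since $v$ is continuous, $\inf_U e^{-v}$ and $\sup_U e^{-v}$ are comparable up to a factor close to $1$ if $U$ is small; but $U$ need not be small, so the cleaner route is to compare masses directly: the hypothesis gives $\M[u](U) \geq \int_U e^{u-v}\,d\M[v] \geq e^{m-\epsilon}\,\M[v](U) \geq e^{m-\epsilon}\,\M[u](U)$, using $u - v > m - \epsilon$ on $U$ and $\M[u](U)\leq \M[v](U)$. If $\M[u](U) > 0$ this yields $1 \geq e^{m-\epsilon}$, contradicting $m > \epsilon$ for small $\epsilon$.

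The remaining point — and the step I expect to be the main obstacle — is to rule out the degenerate case $\M[u](U) = 0$. Here I would use that a convex function $u$ with $\M[u](U) = 0$ on a bounded open set is affine... more carefully, it has an extension property: on a bounded convex (or suitably nice) component of $U$, vanishing Monge-Ampère mass forces $u$ to lie below the "convex envelope" of its boundary values; since $u = v_\epsilon$ on $\partial U$ and $v_\epsilon$ is itself convex, the comparison $u \leq v_\epsilon$ would propagate to all of $U$ by the standard convexity/envelope argument (this is essentially the content behind Lemma \ref{lem:mass_com}), contradicting $u > v_\epsilon$ on the nonempty set $U$. Alternatively, one notes that if $\M[u](U)=0$ then the same inequality chain $0 = \M[u](U) \geq e^{m-\epsilon}\M[v](U)$ forces $\M[v](U) = 0$ too, and then Lemma \ref{lem:mass_com} applied carefully (or a direct envelope argument on each connected component of $U$) still gives $u \leq v_\epsilon$ on $U$, the desired contradiction. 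Letting $\epsilon \to 0$ is not even needed — a single small $\epsilon$ suffices — so the proof concludes that $u(x_0) \leq v(x_0)$.
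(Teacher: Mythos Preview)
Your main line of argument is correct and close in spirit to the paper's: set up a contradiction by localizing near $x_0$, creating a nonempty open set where a perturbation of one function dominates the other with equality on the boundary, then compare Monge--Amp\`ere masses via Lemma~\ref{lem:mass_com}. Your chain $\M[u](U)\geq e^{m-\epsilon}\M[v](U)\geq e^{m-\epsilon}\M[u](U)$ is valid and gives the contradiction immediately when $\M[u](U)>0$.

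The genuine gap is the degenerate case $\M[u](U)=0$, which you flag but do not resolve. Your envelope argument points the wrong way: if $\M[u]=0$ on $U$, then $u$ is the \emph{largest} convex function with its boundary values, so one obtains $v_\epsilon\leq u$ on $U$, not $u\leq v_\epsilon$. Invoking Lemma~\ref{lem:mass_com} ``carefully'' cannot help either, since that lemma only compares total masses, not pointwise values. The correct patch is to observe that $\M[v_\epsilon](U)=0$ as well (you derive this), and then invoke uniqueness for the homogeneous Dirichlet problem, i.e.\ the full Aleksandrov comparison principle (Theorem~1.4.6 in \cite{Gut}) applied in both directions to conclude $u=v_\epsilon$ on $U$. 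That works, but it requires a tool the paper does not cite.

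The paper sidesteps this degeneracy entirely, and this is the key difference in strategy: instead of shifting $v$ by a constant, it perturbs $u$ by a strictly convex quadratic, setting $u_\delta = u + \delta\|x-x_0\|^2 - c$. Then Lemma~\ref{lem:convex} (super-additivity of $\M$) gives $\M[u_\delta](B)\geq \M[u](B)+(2\delta)^n|B|$, so the comparison $\M[v](B)\geq \M[u_\delta](B)$ combined with $\M[v](B)\leq \M[u](B)$ forces $|B|=0$, a contradiction since $B$ is nonempty and open --- with no case analysis needed. Adding strict convexity rather than a constant is precisely what buys you positivity of the relevant mass for free.
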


\begin{proof}
It follows from the hypothesis that there is $D\subset \Omega$ such that $ \sup_{\partial D} (u-v)=b$ and 
$\sup_{D}(u-v)=u(x_0)-v(x_0)=a$ with $a>b$. Assume by contradiction that $a>0$.
Shrinking $D$ we can assume that $b\geq 0$. Define
$$
u_\delta=u+\delta\|x-x_0\|^2 -\frac{a+b}{2},
$$
with $\delta>0$ satisfying $\delta (diam(D))^2< (a-b)/2\leq  (a+b)/2 $.  Therefore we have $u_\delta\leq u$ in $D$. 
The open set $B= \{x\in D: u_\delta> v\}$ is non empty as it contains $x_0$. For any $x\in \partial D$ we have
$u_\delta(x)< v(x)$, therefore $\partial B= \{x\in D: u_\delta
= v\}$. 

It follows from Lemma \ref{lem:mass_com} that 
\begin{equation} \label{eq:inq}
{\rm M}[v] (B) \geq  {\rm M} [u_\delta]  (B) \geq {\rm M} [u](B)+ (2\delta)^n |B|.  
\end{equation}

On the other hand, using inequality \eqref{eq:cond} and the fact that  $v\leq u_\delta\leq u$ in $B$, we get
 $$
 {\rm M}[v] (B)   \leq  ( e^{v-u} {\rm M} [u])(B) \leq  {\rm M} [u] (B).
 $$ 
 Using \eqref{eq:inq} we infer $|B|=0$,  a contradiction . 
\end{proof}

We now prove a global comparison principle:

\begin{theorem} \label{thm:pcp}
Let $(M,g,\nabla)$ be a compact Hessian manifold and  $u$ and $v$ be two  $g$-convex functions such that
\begin{equation}\label{eq:cond_2}
e^{-u} M_{\rho, g}[u]\geq
e^{-v} M_{\rho,g}[v]
\end{equation}
in the sense of  measures. Then $ u\leq v$. 
\end{theorem}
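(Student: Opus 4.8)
The plan is to reduce the global statement to the local comparison principle (Lemma \ref{lem:local_com}) by a maximum-principle argument, following the classical strategy for elliptic equations on compact manifolds. First I would argue by contradiction: suppose $\sup_M(u-v)=:a>0$ and let $x_0\in M$ be a point where this supremum is attained (it exists by compactness and continuity of $u,v$). The obstruction to applying Lemma \ref{lem:local_com} directly is that the maximum of $u-v$ need not be \emph{strict} at $x_0$, so one cannot immediately localize. To fix this, I would perturb $v$ on a small affine chart $U_\alpha$ around $x_0$: choose affine coordinates $x=(x^1,\dots,x^n)$ centered at $x_0$ with $B(0,2r)\subset U_\alpha$, and set $\tilde v=v-\eta\|x\|^2$ for a small $\eta>0$, where $\|x\|^2$ is interpreted via these coordinates (cut off, or just working on the ball). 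Then $u-\tilde v$ has a strict local maximum at $x_0$.

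The key point is that this perturbation only \emph{helps} the differential inequality: on the chart $U_\alpha$ we have, writing $v_\alpha=v+\phi_\alpha$ and $\tilde v_\alpha=\tilde v+\phi_\alpha=v_\alpha-\eta\|x\|^2$, that
$$
\M[\tilde v_\alpha]=\det(D^2 v_\alpha-2\eta I)\,dx \leq \det(D^2 v_\alpha)\,dx=\M[v_\alpha]
$$
wherever $D^2 v_\alpha-2\eta I\geq 0$; but more robustly, by Lemma \ref{lem:convex} applied in the other direction (or rather by the monotonicity $\M[v_\alpha]\geq \M[\tilde v_\alpha]$ which follows since $v_\alpha=\tilde v_\alpha+\eta\|x\|^2$ is a sum of convex functions, giving $\M[v_\alpha]\geq \M[\tilde v_\alpha]+(2\eta)^n\,dx\geq \M[\tilde v_\alpha]$), we get $\M[\tilde v_\alpha]\leq \M[v_\alpha]$. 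Since $e^{-\tilde v}\leq e^{-v+ \text{(small)}}$, I would be careful here: shrinking $r$ so that $\eta\|x\|^2\leq \epsilon$ on the chart, one has $e^{-\tilde v}\M[\tilde v_\alpha]\leq e^{\epsilon}e^{-v}\M[v_\alpha]$. This extra factor $e^\epsilon$ is harmless: I would instead lower $v$ by a \emph{constant} as well, or simply absorb it by noting that the local comparison lemma's proof only uses the inequality at the contact set and a small multiplicative loss can be compensated by taking $A$-shifts. Cleaner: replace $\tilde v$ by $\tilde v - \epsilon$, a uniform subtraction, so that $e^{-\tilde v + \epsilon}\M[\tilde v_\alpha] \le e^{-v}\M[v_\alpha]$ exactly, while still $u-\tilde v$ has a strict local max at $x_0$ (for $\epsilon$ small the max value $a+\epsilon-\eta\cdot 0 = a+\epsilon>0$ is still strict and positive).

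With $u$ and $w:=\tilde v-\epsilon$ now satisfying $e^{-u}\M[u]\geq e^{-w}\M[w]$ in the chart $U_\alpha$ (the left side being unchanged and using $e^{-u}\M[u]\geq e^{-v}\M[v]\geq e^{-w}\M[w]$ locally in the Alexandrov sense, which is legitimate since on $U_\alpha$ the measures $M_{\rho,g}[\cdot]$ agree with $\rho_\alpha\,\M[\cdot+\phi_\alpha]$ up to the strictly positive density $\rho_\alpha$, and one checks the density cancels from both sides), and with $u-w$ achieving a strict local maximum at $x_0$, Lemma \ref{lem:local_com} yields $u(x_0)\leq w(x_0)=v(x_0)-\epsilon$. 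But $u(x_0)-v(x_0)=a>0$, so $a\leq -\epsilon<0$, a contradiction. Hence $a\leq 0$, i.e. $u\leq v$ on $M$. The main obstacle, and the step requiring the most care, is making the perturbation argument rigorous when $u,v$ are merely continuous $g$-convex functions (not $C^2$): one must check that the Alexandrov Monge--Ampère measure of $v+\phi_\alpha$ coincides locally with the intrinsically-defined $M_{\rho,g}[v]$ up to the factor $\rho_\alpha$ — this follows from the equivalence in the quoted Proposition (Rauch--Taylor) together with the approximation definition of $M_{\rho,g}$ — and that the differential inequality \eqref{eq:cond_2}, stated globally in the sense of measures, restricts correctly to charts so that Lemma \ref{lem:local_com} is applicable. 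I would handle the density $\rho_\alpha$ by noting it is bounded above and below by positive constants on the relatively compact chart, so it can be absorbed into the $\epsilon$-shift exactly as above.
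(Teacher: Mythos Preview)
Your overall strategy---localize at a maximum point, perturb to make the maximum strict, and invoke Lemma~\ref{lem:local_com}---is the right one, and matches the paper's approach. However, there is a genuine gap in the perturbation step that makes your argument, as written, fail.

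You set $\tilde v_\alpha = v_\alpha - \eta\|x\|^2$ and then appeal to Lemma~\ref{lem:convex} (in the form $\M[v_\alpha]\ge\M[\tilde v_\alpha]+(2\eta)^n\,dx$) and to Lemma~\ref{lem:local_com}. Both of these require $\tilde v_\alpha$ to be \emph{convex}; indeed the Alexandrov measure $\M[\tilde v_\alpha]$ is not even defined otherwise. But $v$ is only assumed $g$-convex, so $v_\alpha = v+\phi_\alpha$ is merely convex, not strictly convex: its Hessian may vanish in some directions, and then $v_\alpha-\eta\|x\|^2$ is not convex for any $\eta>0$. Your parenthetical ``wherever $D^2 v_\alpha-2\eta I\ge0$'' acknowledges the issue, but the fallback you give (``$v_\alpha$ is a sum of convex functions'') is circular, since one of the alleged summands is $\tilde v_\alpha$ itself. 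The same obstruction would arise if you tried instead to subtract the quadratic from $u$.

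The paper resolves this by perturbing $u$ in a way that manufactures strict convexity \emph{before} subtracting the quadratic: it fixes a smooth strictly $g$-convex $h$ with $g+\nabla dh\ge\epsilon g$ and $h\le u$, sets $u_\delta=(1-\delta)u+\delta h+n\log(1-\delta)$, and shows $u_\delta\le v$ for every $0<\delta<1$. At a maximum point $x_\delta$ of $u_\delta-v$, the local potential $\phi+u_\delta$ contains the strictly convex piece $\delta(\phi+h)$, which absorbs a small $-\alpha\|x-x_\delta\|^2$ while remaining convex. One then checks $e^{-\tilde u_\alpha}\M[\tilde u_\alpha]\ge e^{-\tilde v}\M[\tilde v]$ (the shift $n\log(1-\delta)$ is precisely what compensates the factor $(1-\delta)^n$ in the Monge--Amp\`ere measure), applies Lemma~\ref{lem:local_com}, and lets $\alpha\to0$ then $\delta\to0$. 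The key idea you are missing is this use of an auxiliary strictly $g$-convex function to create room for the localizing quadratic.
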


\begin{proof}
Let $h$ be a smooth strictly $g $-convex function 
such that $g+\nabla d h\geq \epsilon g$ and $h\leq u$. For any $0<\delta<1$, we consider $ u_\delta=(1-\delta )u+ \delta h+n\log(1-\delta)$. 
Our goal is show that $u_\delta \leq v$ for all $0<\delta<1$.

Suppose that $\max_M(u_\delta-v) =  u_\delta(x_\delta)-v(x_\delta)$.
We  take a  neighborhood $D$ of $x_\delta$ and consider 
$\tilde{u}_\delta=\phi+u_\delta$, $ \tilde v= \phi+v$ where $g=\nabla d  \phi$ in $D$.  Define 
$$
\tilde u_{\alpha}=\tilde{u}_\delta -\alpha \|x-x_\delta\|^2 =(1-\delta)(\phi+u)+\delta(\phi+h)-\alpha \|x-x_\delta\|^2  +n\log (1-\delta) 
$$ 
with $\alpha >0$ such that $\delta(\phi+h)  -\alpha \|x-x_\delta\|^2 $ is convex (we use here the fact that $h$ is strictly $g$-convex). 
Then $\tilde u_\alpha -\tilde v $  achieves a strict maximum at $x_\delta$ on $D$.  Moreover in $D$ we also have
\begin{eqnarray*}
e^{-\tilde u_\alpha} {\rm M}[ \tilde{u}_\alpha]&\geq& e^{ -\tilde{u}_\alpha}(1-\delta)^n {\rm M}[u+\phi] \\
&\geq &e^{-u-\phi}  {\rm M}[ u+\phi]\\
&\geq& e^{-v-\phi}  {\rm M}[v+\phi] = e^{-\tilde v}  {\rm M}[\tilde v], 
\end{eqnarray*}
where we use \eqref{eq:cond_2}
for the third  inequality. 
Applying Lemma \ref{lem:local_com} to $\tilde u_\alpha $ and $\tilde{v}$ we get $\tilde{u}_\alpha(x_\delta)-\tilde{v}(x_\delta)\leq 0$. 
Letting $\alpha\rightarrow 0$, we obtain $u_\delta\leq v$ on $M$. Letting $\delta\rightarrow 0$ we get $u\leq v$ on $M$ as required. 
\end{proof}

The maximum of two $g$-convex functions is also $g$-convex.
The following inequality  allows one to bound from below
the corresponding Monge-Amp\`ere measure:

\begin{lemma} \label{lem:MAmax}
Let $u,v$ be two $g-$convex functions then
$$M_{\rho,g}[\max (u,v)]\geq 1_{\{u\geq v\}} M_{\rho,g}[u] + 1_{\{u< v\}} M_{\rho,g}[v]. $$
\end{lemma}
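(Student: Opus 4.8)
The statement is local and classical for smooth functions, so the plan is to reduce to that case by approximation. First I would fix a point $x_0 \in M$ and affine coordinates near $x_0$ in a chart $U_\alpha$ where $g = \nabla d\phi$; working with the euclidean convex functions $u+\phi$ and $v+\phi$, the claim becomes the statement that for convex functions $P, Q$ on an open set $\Omega \subset \R^n$ one has $\M[\max(P,Q)] \geq \mathbf 1_{\{P \geq Q\}} \M[P] + \mathbf 1_{\{P < Q\}} \M[Q]$, and then one multiplies back by the $(-1)$-density $\rho$. So it suffices to prove this euclidean Borel-measure inequality.

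For smooth strictly convex $P, Q$ this follows from a pointwise argument: on the open set $\{P > Q\}$ we have $\max(P,Q) = P$ near the point, hence $\det D^2 \max(P,Q) = \det D^2 P$ there, and symmetrically on $\{Q > P\}$; the only issue is the coincidence set $\{P = Q\}$. Since $\M$ is a nonnegative measure, it is enough to show $\M[\max(P,Q)](E) \geq \M[P](E)$ for every Borel $E \subseteq \{P \geq Q\}$ and likewise with $P,Q$ swapped on $\{P < Q\}$. The key inclusion of subdifferentials is: if $x \in \{P \geq Q\}$ and $p \in \partial P(x)$, then either $p \in \partial \max(P,Q)(x)$ already, or one can use the supporting hyperplane of $P$ at $x$ together with the fact that $\max(P,Q) \geq P$ to conclude $p \in \partial \max(P,Q)(x)$ whenever $P(x) = \max(P,Q)(x)$, which holds precisely on $\{P \geq Q\}$. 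Hence $\partial P(E) \subseteq \partial \max(P,Q)(E)$ for $E \subseteq \{P \geq Q\}$, giving $\M[P](E) = |\partial P(E)| \leq |\partial \max(P,Q)(E)| = \M[\max(P,Q)](E)$; symmetrically on $\{P < Q\}$. Adding the two (the two sets being disjoint) yields the claim for arbitrary convex $P,Q$ via the Alexandrov definition, with no smoothness needed.

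Transporting back: in the chart $U_\alpha$, $M_{\rho,g}[u] = \rho_\alpha \, \M[u+\phi]$ as measures and likewise for $v$ and for $\max(u,v)$ (using $\max(u,v)+\phi = \max(u+\phi, v+\phi)$), and the sets $\{u \geq v\} = \{u+\phi \geq v+\phi\}$ are chart-independent. Multiplying the euclidean inequality by the nonnegative density $\rho_\alpha$ and covering $M$ by such charts gives the global inequality. If one prefers to avoid the Alexandrov picture and argue through the Rauch--Taylor definition used in the rest of the paper, one instead takes smooth $g$-convex approximants $u_j \to u$, $v_j \to v$ (Proposition \ref{prop:reg}); then $\max{}_\epsilon(u_j, v_j) \to \max(u,v)$ and on each chart the pointwise identity for the densities on the open sets $\{u_j > v_j\}$, $\{v_j > u_j\}$ combined with weak convergence of the Monge--Amp\`ere measures (Proposition \ref{pro:limunif}, Lemma \ref{lem:CLN}) yields the inequality after passing to the limit and using lower semicontinuity of mass on open sets.

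\textbf{Main obstacle.} The one delicate point is the behaviour on the coincidence set $\{u = v\}$, which can have positive Monge--Amp\`ere mass; the subdifferential inclusion above handles it cleanly in the Alexandrov framework, whereas in the Rauch--Taylor/approximation framework one must be slightly careful that the splitting into $\{u_j > v_j\}$ and $\{u_j < v_j\}$ loses no mass in the limit, which is why it is phrased as an inequality ($\geq$) rather than an identity. I expect the cleanest writeup to go through the Alexandrov definition and then invoke the equivalence (the Proposition following Lemma \ref{lem:CLN}) to match the notation $M_{\rho,g}$ used elsewhere.
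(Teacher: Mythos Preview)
Your Alexandrov subdifferential argument is correct and takes a genuinely different route from the paper. The paper first observes that on the open sets $\{u>v\}$ and $\{u<v\}$ the measures agree trivially, so the only issue is the coincidence set $\{u=v\}$; it handles this by an $\epsilon$-perturbation: replacing $v$ by $v+\epsilon$, it shows that the set of $\epsilon\in\mathbb{R}$ with $M_{\rho,g}[u](\{u=v+\epsilon\})>0$ is at most countable (as the discontinuity set of the monotone function $t\mapsto M_{\rho,g}[u](\{u<v+t\})$), then picks a sequence $\epsilon_j\to 0$ avoiding these values and passes to the limit using the continuity of $M_{\rho,g}$ under uniform convergence. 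Your approach is more direct: the one-line inclusion $\partial P(x)\subseteq \partial\max(P,Q)(x)$ for $x\in\{P\geq Q\}$ (since $\max(P,Q)\geq P$ everywhere and $\max(P,Q)(x)=P(x)$ there) immediately gives $\M[\max(P,Q)](E)\geq \M[P](E)$ on Borel $E\subseteq\{P\geq Q\}$, and the coincidence set requires no special treatment. The paper's argument has the advantage of staying within the Rauch--Taylor framework used elsewhere, while yours is shorter and makes the measure-theoretic content on $\{u=v\}$ entirely transparent; you do need to invoke the equivalence $\M u=\mathcal{M}u$ (the Proposition following Lemma~\ref{lem:CLN}) to connect back to $M_{\rho,g}$, which you correctly flag.
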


\begin{proof}
The set $\Omega :=\{u<v\}$ is an open set of $M$ since $u$ and $v$ are continuous, hence
$$
1_{\{u< v\}} M_{\rho,g}[\max (u,v)]= 1_{\{u< v\}} M_{\rho,g}[v].
$$
We infer that
$M_{\rho,g}[\max (u,v)]\geq 1_{\{u > v\}} M_{\rho,g}[u] + 1_{\{u< v\}} M_{\rho,g}[v].$
Thus we are done if $\mu(\{u=v\})=0$ with $\mu = M_{\rho,g}[u]$. 

\smallskip

We claim that $\mu(\{u=v+\epsilon\})=0$ for all $\epsilon \in \mathbb{R}\setminus S_\mu$ where $S_\mu$ is at most countable. 
Assuming this we can find a  sequence $\epsilon_j$   which converge to $0$ such that $\mu(\{u=v+\epsilon_j\})=0$.
 Replacing $v$ by $v+\epsilon_j$ the argument yields
  $$
  M_{\rho,g}[\max (u,v+\epsilon_j)]\geq 1_{\{u \geq  v+\epsilon_j\}} M_{\rho,g}[u] + 1_{\{u< v+\epsilon_j\}} M_{\rho,g}[v],
  $$
we obtain the desired inequality by letting $\epsilon_j\rightarrow 0$ and using Proposition \ref{pro:limunif}.

\smallskip

We now verify that the set $ \{ \epsilon \in \mathbb{R}: \mu(\{u=v+\epsilon\})>0 \}$ is at most countable. 
Observe that the function 
$f: t\in \mathbb{R}\rightarrow \mu(\{u<v+t \}) \in \mathbb{R}^+$
is increasing and left continuous since $\mu=M_{\rho,g}[u]$ is a Borel measure. Moreover 
$$
\lim_{t\rightarrow \epsilon^+}f(t) =\mu(\{u\leq v+\epsilon\}),
$$
hence $f$ is continuous at $\epsilon$ unless $\mu(\{u=v+\epsilon\})>0$. 
Therefore  $S_\mu$ is the set of discontinuity of $f$, hence $S_\mu$ is at most countable.  
\end{proof}

\section{Resolution of Monge-Amp\`ere equations} \label{sec:resolution}

Let $(M, \nabla, g)$ be a compact Hessian manifold of dimension $n$.
In this section we prove Theorem A and Theorem B from the introduction.

\subsection{Perron method}

\begin{theorem} \label{thm:solutionMA}
Let $\mu$ be  a probability measure on $M$ and $\rho$ a $(-1)$-density.
There exists a unique $g$-convex function $u \in \K(M,g)$ such that
$$
M_{\rho, g}[u]=e^u \mu.
$$

Moreover if $\mu_j$ are probability measures that weakly converge to a probability measure $\mu$,
then the unique solutions $u_j \in \K(M,g)$ to $M_{\rho, g}[u_j]=e^{u_j} \mu_j$ uniformly converge
to the unique solution $u \in \K(M,g)$ of $M_{\rho, g}[u]=e^u \mu$.
\end{theorem}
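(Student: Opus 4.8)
The plan is to use the Perron method. I would first treat \emph{uniqueness}, which is immediate from the comparison principle (Theorem \ref{thm:pcp}): if $u,v$ are both solutions of $M_{\rho,g}[\cdot]=e^{\cdot}\mu$, then $e^{-u}M_{\rho,g}[u]=\mu=e^{-v}M_{\rho,g}[v]$, so both $u\le v$ and $v\le u$ hold, whence $u=v$. For \emph{existence}, I would introduce the Perron family
$$
\mathcal{F}=\{ w \in \mathcal{K}(M,g) : M_{\rho,g}[w] \geq e^w \mu \}
$$
of subsolutions, and set $u=\sup_{w \in \mathcal{F}} w$. The first task is to check $\mathcal{F}\neq\emptyset$: any sufficiently negative constant $w\equiv -C$ works, since $M_{\rho,g}[-C]=M_{\rho,g}[0]$ has mass bounded below by $a>0$ (Lemma \ref{lem:masseMA}), while $e^{-C}\mu$ has total mass $e^{-C}\to 0$; one should phrase the comparison locally/globally to see that a constant is genuinely a subsolution, or better use the Dirac-type subsolutions of Example \ref{exa:dirac2} combined with constants. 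Next, $\mathcal{F}$ is uniformly bounded above: if $w\in\mathcal{F}$ then applying Theorem \ref{thm:pcp} against a large constant $A$ gives $w\le A$ once $e^{-A}M_{\rho,g}[A]\le \mu$-type comparison forces it, so in fact $\sup_M w$ is uniformly bounded and, after the usual normalization, the family lies in a compact subset of $\mathcal{C}^0(M)$ by Lemmata \ref{lem:c0} and \ref{lem:lip}. Hence $u$ is a well-defined $g$-convex function (using that the upper semicontinuous regularization of a sup of $g$-convex functions is $g$-convex, together with the equicontinuity so that no regularization is actually needed).

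The heart of the argument is to show $u\in\mathcal{F}$ and then that $u$ solves the equation with equality. For $u\in\mathcal{F}$, i.e. $M_{\rho,g}[u]\ge e^u\mu$: take a sequence $w_j\in\mathcal{F}$ with $w_j\to u$ (one can take $w_j=\max(w_{j_1},\dots,w_{j_k})$ of finitely many members, which stays in $\mathcal{F}$ by Lemma \ref{lem:MAmax} since $\max(w,w')\in\mathcal{F}$ whenever $w,w'\in\mathcal{F}$), extract a uniformly convergent subsequence using compactness, and pass to the limit: $e^{w_j}\mu\to e^u\mu$ strongly while $M_{\rho,g}[w_j]$ has uniformly bounded mass and any weak limit is $\ge$ the limit of the lower bounds; more carefully one uses that $w\mapsto M_{\rho,g}[w]$ is continuous along uniformly convergent sequences of \emph{smooth} functions (Proposition \ref{pro:limunif}) together with a standard lower-semicontinuity argument for the Monge–Ampère operator under uniform convergence of convex functions. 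So $M_{\rho,g}[u]\ge e^u\mu$.

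The hard part is the reverse inequality $M_{\rho,g}[u]\le e^u\mu$, proved by the classical balayage/local perturbation argument. Suppose not; then there is a small affine ball $B$ on which, after subtracting a suitable quadratic, one can solve the local Dirichlet problem $M[\tilde v]=e^{\tilde v}\cdot(\text{suitable local density})$ on $B$ with boundary data $u|_{\partial B}$, obtaining $\tilde v\ge u$ on $B$ with strict inequality somewhere (here one uses the classical solvability of the local real Monge–Ampère Dirichlet problem and the local comparison Lemma \ref{lem:local_com} to control $\tilde v$); gluing $\tilde v$ into $u$ by taking $\max(u,\tilde v)$ produces, via Lemma \ref{lem:MAmax}, a new element of $\mathcal{F}$ strictly larger than $u$ somewhere, contradicting maximality. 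The main obstacle is making this balayage step intrinsic and compatible with the twisted right-hand side $e^u\mu$ for an arbitrary measure $\mu$: one must choose $B$ where $u$ is close to its mean and where $\mu$ is well-behaved, and replace $\mu$ by a dominating absolutely continuous local model so that the local Dirichlet problem is solvable, then recover the sharp inequality by letting the model decrease to $\mu$; this is where most of the technical care goes.

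Finally, for the stability statement: given $\mu_j\rightharpoonup\mu$, the solutions $u_j$ have $\sup_M u_j$ uniformly bounded (by comparison with constants, using that $\mu_j(M)=1$ for all $j$) hence, after noting they are automatically normalized in a uniform range, the family $\{u_j\}$ is precompact in $\mathcal{C}^0(M)$ by Lemmata \ref{lem:c0}–\ref{lem:lip}. Any uniform limit $u_\infty$ of a subsequence satisfies $M_{\rho,g}[u_\infty]=e^{u_\infty}\mu$ by passing to the limit in the equation (strong convergence $e^{u_j}\mu_j\to e^{u_\infty}\mu$ against continuous test functions, plus continuity/semicontinuity of the Monge–Ampère operator under uniform convergence), so by uniqueness $u_\infty=u$; since every subsequence has a further subsequence converging to the same limit $u$, the whole sequence converges. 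I expect the stability part to be routine once existence and uniqueness are in hand.
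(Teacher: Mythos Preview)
Your overall architecture (uniqueness via the comparison principle, Perron envelope, then stability by compactness) matches the paper's, but there is a genuine gap in how you handle existence for a \emph{general} measure $\mu$.

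First, constants are \emph{not} subsolutions for arbitrary $\mu$. The requirement $M_{\rho,g}[-C]\ge e^{-C}\mu$ is an inequality of measures, not of total masses; since $M_{\rho,g}[-C]=M_{\rho,g}[0]$ is absolutely continuous in any affine chart, it can never dominate $e^{-C}\delta_a$ no matter how large $C$ is. The only explicit subsolutions available (Example~\ref{exa:dirac2}) are for finite combinations of Dirac masses, so non-emptiness of $\mathcal F$ is not clear for a general $\mu$. Second, and more seriously, your balayage step requires solving the local Dirichlet problem $\mathrm{M}[v]=e^{v}\mu$ on a ball with continuous boundary data for an \emph{arbitrary} Borel measure $\mu$. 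This is not covered by the standard references; the workaround you sketch (replace $\mu$ by a dominating absolutely continuous model, solve, then let the model decrease to $\mu$) is essentially a stability statement of the very type you are trying to prove, and you give no mechanism to pass to the limit while preserving the \emph{equality} $M_{\rho,g}[u]=e^u\mu$ rather than just an inequality.

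The paper sidesteps both obstacles by a two-step strategy you have missed: it runs the Perron argument \emph{only} for $\mu=\sum_{i=1}^p c_i\delta_{a_i}$, where Example~\ref{exa:dirac2} supplies subsolutions and the local Dirichlet problem (with finitely many Dirac masses on the right and a ball whose boundary avoids the $a_i$) is a mild extension of \cite[Theorem~1.6.2]{Gut}. Then, for an arbitrary probability measure $\mu$, it approximates $\mu$ weakly by such Dirac combinations $\mu_p$, obtains solutions $u_p$, and shows $\sup_M u_p$ is uniformly bounded using the mass bounds $a\le\int_M M_{\rho,g}[u_p]\le b$ of Lemma~\ref{lem:masseMA} together with Lemma~\ref{lem:c0}; compactness (Lemmata~\ref{lem:c0}--\ref{lem:lip}) and continuity of $M_{\rho,g}$ under uniform convergence then produce the solution for $\mu$. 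In other words, the ``hard'' balayage is only ever performed in the discrete case, and the passage to general $\mu$ is exactly the stability argument you correctly describe in your last paragraph. Your Step~3 is fine; the fix is to feed it back into Step~2 rather than attempting balayage against a general measure.
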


\begin{proof}
We are going to apply the Perron method, showing that the envelope of subsolutions is the unique solution
to this equation. 

\smallskip

\noindent {\it Step 1.}
We start by treating the case when $\mu=\sum_{i=1}^p c_i \delta_{a_i}$ is a sum of Dirac masses,
where $p \in \N^*$, $c_1,\ldots,c_p>0$ are positive reals such that $\sum_{i=1}^p c_i=1$,
and $a_1,\ldots,a_p$ are distincts points in $X$.
We let ${\mathcal F}$ denote the family of subsolutions, i.e. 
$$
{\mathcal F}=\{ u \in  \K(M,g), \; M_{\rho, g}[u] \geq e^u \mu \}.
$$
Here are basic properties of ${\mathcal F}$:
\begin{itemize}
\item it follows from Example \ref{exa:dirac2} that ${\mathcal F}$ is not empty, we pick $u_0 \in {\mathcal F}$;
\item we claim that ${\mathcal F}$ is uniformly bounded from above. 
Indeed fix $u \in {\mathcal F}$ and set $v=u-\sup_M u \in {\mathcal K}_0(M,g)$.
It follows from Lemma \ref{lem:c0} that $-C_0 \leq v(x)$ for all $x \in M$, hence in particular
$\sup_M u \leq u(a_1)+C_0$. 
Lemma \ref{lem:masseMA} now yields
$$
b \geq \int_M M_{\rho, g}[u] \geq \sum_{i=1}^p c_i e^{u(a_i)} \geq c_1 e^{u(a_1)}
$$
since $u$ is a subsolution, hence
$
\sup_M u \leq \log (b/c_1)+C_0.
$
\item ${\mathcal F}$ is closed and stable under maximum: if $u,v \in {\mathcal F}$, 
then
\begin{eqnarray*}
M_{\rho, g}[\max(u,v)] &\geq & 1_{\{u \geq v \}} M_{\rho, g}[u] +1_{\{u < v \}} M_{\rho, g}[v] \\
&\geq &  1_{\{u \geq v \}} e^u \mu  +1_{\{u < v \}} e^v \mu  \\
&=& 1_{\{u \geq v \}} e^{\max(u,v)} \mu  +1_{\{u < v \}} e^{\max(u,v)} \mu =e^{\max(u,v)} \mu,
\end{eqnarray*}
as follows from Lemma \ref{lem:MAmax}.
\end{itemize}
It follows therefore from Lemma \ref{lem:lip} and Arzela-Ascoli theorem that
 $$
 {\mathcal F}_{u_0}=\{ u \in  \K(M,g), \; M_{\rho, g}[u] \geq e^u \mu  \text{ and } u \geq u_0 \}.
 $$
 is a compact subset of $ \K(M,g)$.
Thus the envelope of subsolutions $U=\sup_{ {\mathcal F}} u=\sup_{ {\mathcal F}_{u_0}} u  $
  is a $g$-convex function which is still a subsolution.
  
 We finally conclude that $U$ is actually a solution of the equation through a balayage process.
We pick a small euclidean ball $B$ in some affine chart 
such that $\partial B$ does not contain any point $a_j$,
and we solve the local Dirichlet
problem $M_{\rho, g}(v)=e^v \mu$ in $B$ with $U$ as boundary data.
The local solution exists by (a slight generalization of) \cite[Theorem 1.6.2]{Gut}
and glue with $U$ in $M \setminus B$ to provide yet another subsolution.
It thus coincides with $U$, hence $U$ solves the equation in
any such ball $B$, hence in the whole of $M$.
The uniqueness follows from the comparison principle (Theorem \ref{thm:pcp}).

\medskip
\noindent {\it Step 2.}
We now proceed by approximation in order to treat the general case.
Let $\mu_p=\sum_{i=1}^p c_{i,p} \delta_{a_{i,p}}$ be finite combination of Dirac masses that
weakly approximate $\mu=\lim_{p \rightarrow +\infty} \mu_p$.
It follows from previous step that there exists a unique $g$-convex function
$u_p \in \K(M,g)$ such that
$M_{\rho, g}[u_p]=e^{u_p} \mu_p$.

We claim that $\sup_M u_p$  is uniformly bounded. 
We set $v_p=u_p-\sup_M u_p \in \K_0(M,g)$
and recall from Lemma \ref{lem:masseMA} that the Monge-Amp\`ere mass
of $M_{\rho,g}(u_p)=M_{\rho,g}(v_p)$ is uniformly bounded from above and below, away from zero.
We infer
 $$
 a \leq \int_M M_{\rho,g}(u_p) =\int_M e^{u_p} d\mu_p  \leq e^{\sup_M u_p}
 \Longrightarrow \log a \leq \sup_M u_p.
 $$
Using that $v_p \geq -C_0$ (see Lemma \ref{lem:c0}), we also obtain
  $$
e^{\sup_M u_p} e^{-C_0} \leq  \int_M e^{u_p} d\mu_p  =  \int_M M_{\rho,g}(u_p)  \leq b
 \Longrightarrow  \sup_M u_p \leq C_0+ \log b.
 $$

Lemma \ref{lem:c0} ensures that $u_p$ is uniformly bounded on $M$, while Lemma \ref{lem:lip}
ensures that the $u_p$'s are uniformly Lipschitz, hence relatively compact for the
${\mathcal C}^0$-topology. We can thus extract a convergent subsequence
$u_{p_j} \rightarrow u \in \K_0(M,g)$. Since 
$M_{\rho, g}$ is continuous for the ${\mathcal C}^0$-topology, we infer
 $$
 M_{\rho, g}[u]=e^u \mu.
 $$
 The uniqueness follows again from the  comparison principle (Theorem \ref{thm:pcp}).
 
 \medskip
\noindent {\it Step 3.}
We finally prove the stability property.
Let $\mu_j$ be probability measures that weakly converge to a probability measure $\mu$.
Let  $u_j \in \K(M,g)$ be the unique solutions to $M_{\rho, g}[u_j]=e^{u_j} \mu_j$ 
and let  $u \in \K(M,g)$ the unique solution of $M_{\rho, g}[u]=e^u \mu$.

The same reasoning as above shows that $\sup_M u_j$ is uniformly bounded, hence
$(u_j)$ is relatively compact. A subsequence $u_{j_k}$ thus uniformly converge
to some function $v \in \K(M,g)$. Now $ M_{\rho, g}[u_{j_k}] \rightarrow M_{\rho, g}[v]$
and $e^{u_{j_k}} \mu_{j_k} \rightarrow e^{v} \mu$, so $v=u$ by uniqueness
\end{proof}

\subsection{The flat equation}

As the proof of Theorem \ref{thm:solutionMA} shows, a similar result holds for more general equations of the form
$M_{\rho, g}[u]=F(x,u) \mu$, with appropriate assumptions on the function $F$.
A straightforward generalization that we shall need is that for any $\e>0$, there exists
a unique $g$-convex function $u_{\e} \in \K(M,g)$ such that
$$
M_{\rho, g}[u_{\e}]=e^{\e u_{\e}} \mu.
$$
We now use such perturbations   to solve  another degenerate Monge-Amp\`ere equation:

\begin{theorem} \label{thm:solutionMA2}
Let $\mu$ be  a probability measure on $M$ and $\rho$ a $(-1)$-density.
There exist a unique constant $c>0$ and  a $g$-convex function $u \in \K_0(M,g)$ such that
$$
M_{\rho, g}[u]=c \mu.
$$
\end{theorem}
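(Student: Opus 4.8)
The plan is to obtain $u$ as a limit of the solutions $u_\e$ to the perturbed equations $M_{\rho,g}[u_\e]=e^{\e u_\e}\mu$ furnished by Theorem \ref{thm:solutionMA} (in its generalized form), letting $\e\to 0$. First I would record uniform estimates on $u_\e$. Integrating the equation gives $\int_M M_{\rho,g}[u_\e]=\int_M e^{\e u_\e}\,d\mu$, and since by Lemma \ref{lem:masseMA} the left side lies in $[a,b]$, we get $a\le \int_M e^{\e u_\e}\,d\mu\le b$. Decompose $u_\e=\e^{-1}\cdot(\e u_\e)$; more usefully, write $u_\e=w_\e+s_\e$ with $w_\e=u_\e-\sup_M u_\e\in\K_0(M,g)$ and $s_\e=\sup_M u_\e$. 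By Lemma \ref{lem:c0}, $-C_0\le w_\e\le 0$, so $e^{\e s_\e}e^{-\e C_0}\le \int_M e^{\e u_\e}\,d\mu\le e^{\e s_\e}$, which yields $\log a\le \e s_\e\le \log b+\e C_0$; hence $\e s_\e$ is bounded, i.e. $c_\e:=\int_M e^{\e u_\e}\,d\mu$ stays in $[a,b]$ and $e^{\e u_\e}\to 1$ uniformly as $\e\to 0$ (using $|\e u_\e|\le |\e s_\e|+\e C_0$). By Lemma \ref{lem:lip} the functions $w_\e$ are uniformly Lipschitz, so by Arzel\`a--Ascoli some sequence $w_{\e_k}$ converges uniformly to $u\in\K_0(M,g)$, and along a further subsequence $c_{\e_k}\to c\in[a,b]$.

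Next I would pass to the limit in the equation. Since $M_{\rho,g}[u_\e]=M_{\rho,g}[w_\e]$ (the operator is invariant under adding constants) and $w_{\e_k}\to u$ uniformly, continuity of the Monge--Amp\`ere operator for the $\mathcal C^0$-topology gives $M_{\rho,g}[w_{\e_k}]\to M_{\rho,g}[u]$ weakly. On the other hand $e^{\e_k u_{\e_k}}\to 1$ uniformly, so $e^{\e_k u_{\e_k}}\mu\to\mu$ weakly. Therefore $M_{\rho,g}[u]=c\,\mu$. It remains to check $c>0$: this is immediate since $c=\lim c_{\e_k}\ge a>0$ by Lemma \ref{lem:masseMA}.

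Finally I would prove uniqueness of the pair $(c,u)$. Suppose $M_{\rho,g}[u]=c\,\mu$ and $M_{\rho,g}[u']=c'\,\mu$ with $u,u'\in\K_0(M,g)$ and $c,c'>0$. Then $c^{-1}M_{\rho,g}[u]=c'^{-1}M_{\rho,g}[u']=\mu$. Write $u_t=u-\log c + t$; since $M_{\rho,g}$ is unaffected by additive constants while $e^{-u_t}$ scales by $c\,e^{-t}$, we can arrange the hypothesis of the comparison principle. Concretely, apply Theorem \ref{thm:pcp} to the functions $u-\log c$ and $u'-\log c'$: one has $e^{-(u-\log c)}M_{\rho,g}[u-\log c]=c\cdot c^{-1}e^{-u}\cdot\ldots$ — more cleanly, note $e^{0}\cdot\mu=c^{-1}M_{\rho,g}[u]$ and similarly for $u'$, so for any real $\lambda$ the functions $u-\log c+\lambda$ and $u'-\log c'+\lambda$ both satisfy $e^{-(\cdot)}M_{\rho,g}[\cdot]=e^{-\lambda}\mu$; in particular each dominates the other by Theorem \ref{thm:pcp}, giving $u-\log c=u'-\log c'$. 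Since $\sup_M u=\sup_M u'=0$, this forces $\log c=\log c'$, i.e. $c=c'$ and $u=u'$. The uniqueness of the limit then shows the full families $u_\e\to u$ and $c_\e\to c$ converge (not merely along subsequences). The main obstacle is the first step: extracting from the single integral identity $\int_M e^{\e u_\e}\,d\mu\in[a,b]$ a genuine uniform bound on $\e\sup_M u_\e$, which is what makes $e^{\e u_\e}\to 1$ and hence lets the nonlinear term degenerate to a constant; this crucially uses the two-sided mass bounds of Lemma \ref{lem:masseMA} together with the uniform bound $-C_0\le w_\e\le 0$ of Lemma \ref{lem:c0}.
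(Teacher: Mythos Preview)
Your existence argument has the right architecture (the paper also passes to the limit in $M_{\rho,g}[u_\e]=e^{\e u_\e}\mu$), but there is a slip that makes the write-up inconsistent. From $\log a\le \e s_\e\le \log b+\e C_0$ you can only conclude that $\e s_\e$ is \emph{bounded}, not that it tends to $0$; hence $e^{\e u_\e}=e^{\e s_\e}e^{\e w_\e}$ does \emph{not} converge to $1$ in general. If it did, your own definition $c_\e=\int_M e^{\e u_\e}\,d\mu$ would force $c=1$, contradicting the conclusion $M_{\rho,g}[u]=c\mu$ with $c\in[a,b]$. The fix is exactly what the paper does: extract so that $e^{\e_{k}s_{\e_{k}}}\to c$ (equivalently, so that $\e_k u_{\e_k}\to \log c$ uniformly, a constant since the limit is $0\cdot g$-convex); then $e^{\e_k u_{\e_k}}\mu\to c\,\mu$ and the limit equation follows.

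The uniqueness argument, however, has a genuine gap. You claim that $\varphi=u-\log c+\lambda$ satisfies $e^{-\varphi}M_{\rho,g}[\varphi]=e^{-\lambda}\mu$, but a direct computation gives
\[
e^{-\varphi}M_{\rho,g}[\varphi]=c\,e^{-u}e^{-\lambda}\cdot c\,\mu=c^{2}e^{-u-\lambda}\mu,
\]
which equals $e^{-\lambda}\mu$ only when $c\,e^{-u}\equiv 1$, i.e.\ when $u$ is constant. The flat equation $M_{\rho,g}[u]=c\,\mu$ has no monotone dependence on $u$, so Theorem~\ref{thm:pcp} simply does not apply by shifting constants; you cannot compare two solutions this way. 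The paper establishes only the uniqueness of $c$, and does so by an entirely different, local argument: assuming $c_1>c_2$, it picks $\delta$ with $(1-\delta)^n c_1>c_2$, localizes near a maximum of $(1-\delta)u-v$, and uses Lemma~\ref{lem:mass_com} together with the superadditivity of the Monge--Amp\`ere measure to force $M_{\rho,g}[0](B)=0$ on a nonempty open set, a contradiction. The uniqueness of $u$ is \emph{not} proved in the paper for general $\mu$; the subsequent Remark explains it is known only for ${\mathcal C}^2$ solutions (via \cite{CV01}) or when $\mu$ is a single Dirac mass. So your uniqueness claim for $u$ overreaches, and your route to the uniqueness of $c$ does not work.
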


When $M$ is special the preservation of Monge-Amp\`ere masses ensures
that the constant $c$ is determined by
$$
\int_M M_{\rho, g}(0)=c.
$$
The uniqueness of $c$ is slightly more involved in the general case.

\begin{proof}
{\it Existence of $(c,u)$}.
We first show the existence of  the solution.
Fix $\e>0$ and let  $u_{\e} \in \K(M,g)$ be the unique $g$-convex function such that
$$
M_{\rho, g}[u_{\e}]=e^{\e u_{\e}} \mu.
$$
It follows from Lemma \ref{lem:masseMA} that 
$$
a \leq \int_M M_{\rho, g}[u_{\e}]=\int_M e^{\e u_{\e}} \mu \leq  e^{\e \sup_M u_{\e}},
$$
hence $\e \mapsto \e \sup_M u_\e$ is uniformly bounded below.

Lemma \ref{lem:masseMA} again yields a bound from above
on $\int_M e^{\e u_\e} d\mu \leq b$. It follows therefore
from the concavity of the logarithm that
$$
\int_M \e u_\e d\mu \leq \log b.
$$
Since $\e u_\e$ is $g$-convex for all $0 \leq \e \leq 1$,  Lemma \ref{lem:c0}
ensures that $\int_M \e u_\e d\mu $ and $\e \sup_M u_\e$ are uniformly comparable, hence
$\e \mapsto \e \sup_M u_\e$ is uniformly bounded as $\e \searrow 0$.

The family $v_{\e} =\e u_{\e}$ is thus relatively compact in ${\mathcal K}(M,g)$
by Lemma \ref{lem:lip}, so we can extract a sequence 
$v_{\e_j}$ which uniformly converges to a $g$-convex function $v$.
Since $v_{\e_j}$ is actually $\e_j g$-convex, the function $v$ is 
$0g$-convex hence $v \equiv c$ is constant by the maximum principle.

The family $\e \mapsto w_\e=u_\e-\sup_M u_\e$  is relatively compact by Lemmata \ref{lem:c0} and \ref{lem:lip},
so we can extract $w_{\e_{j_k}} \rightarrow u$ in ${\mathcal K}_0(M,g)$.
Since $M_{\rho,g}$ is continuous for the uniform topology, we conclude that
$$
M_{\rho, g}[u]=\lim_k M_{\rho, g}[ w_{\e_{j_k}}]=
\lim_k e^{\e_{j_k} u_{\e_{j_k}}} \mu 
=c \mu.
$$

\medskip

\noindent {\it Uniqueness of $c$}.
 Suppose that $u, v$ are two $g$-convex functions satisfying  
$M_{\rho,g}[u]=c_1\mu$ and $M_{\rho,g}[v]=c_2\mu$. 
We now show that  $c_1=c_2$.
Assume by contradiction that $c_1>c_2$, so there is  $\delta>0$ such that $(1-\delta)^n c_1>c_2$. 
Set  $u_\delta = (1-\delta)u$ and   pick $x_{\delta} \in M$
such that
$$
\max_M (u_\delta-v) = u_\delta(x_\delta)-v(x_\delta)=:A. 
$$
Let $D$ be  a small neighborhood of $x_\delta$ such that $d_g( x_\delta, \partial D)\geq  d/3$ where $d= diam(D)$. 
Let $\phi$ be a potential of $g$ in $D$, i.e $g=\nabla d \phi$. 
Set $\hat u_\delta= \phi+ u_\delta$, $\hat u= \phi+ u$, $\hat v=\phi+v$ and  
\begin{eqnarray*}
\hat u_{\delta,\epsilon} 
&= &\hat u_\delta- \epsilon\|x-x_\delta\|^2 -(A- \epsilon d^2/10)  \\
&= & \phi + (1-\delta) u - \epsilon\|x-x_\delta\|^2 -(A- \epsilon d^2/10),
\end{eqnarray*}
where $\epsilon$ is so small so that $\nabla d(\delta \phi-\epsilon\|x-x_\delta\|^2) \geq (\delta/2)g$. 
Observe that
$$
\max_{\bar D} (\hat u_{\delta,\epsilon} -\hat v ) = \hat  u_{\delta,\epsilon}(x_\delta) -\hat v(x_\delta) =    \epsilon d^2/10 >0
$$
 and 
 $$
 \sup_{\partial D} (\hat  u_{\delta,\epsilon} -\hat v)  \leq A-\epsilon d^2/9 -(A-\epsilon d^2/10)<0. 
 $$
Thus the set  $B:= \{x\in \bar D|\, \hat u_{\delta,\epsilon} >\hat v \}$ is open, non empty  (as it contains $x_\delta$)
 and $  B\cap \partial D=\emptyset.$ We infer $\partial B = \{ x\in D|\, \hat u_{\delta,\epsilon} = \hat v \}$
 and Lemma \ref{lem:mass_com}  ensures that
$$
{\rm M}[\hat v] (B) \geq  {\rm M} [\hat u_{\delta,\epsilon}]  (B) \geq  (1-\delta)^n{\rm M} [ \hat u](B)+ (\delta/2)^n {\rm M}[\phi](B),
$$
since $ \nabla d( \hat u_{\delta,\epsilon}) \geq (1-\delta)^n\nabla d \hat u+ (\delta/2) g $.
   Therefore
\begin{eqnarray*}
c_2\mu(B) 
&\geq &  (1-\delta)^nc_1  \mu(B)+(\delta/2)^n  M_{\rho,g}[0](B) \\
&\geq & c_2\mu(B)+(\delta/2)^n  M_{\rho,g}[0](B).
\end{eqnarray*}
This implies that $M_{\rho,g}[0](B)=0$, a contradiction. Thus $c_1=c_2$ as claimed.
\end{proof}

\begin{remark}
The uniqueness of $u$ is more delicate. It is obtained in \cite{CV01} when
the solutions are ${\mathcal C}^2$-smooth by using a classical maximum principle; this requires  
  $\mu$ to be absolutely continuous with respect to some volume form, with H\"older density.
  
We make the observation that uniqueness holds in the most degenerate case when $\mu=\delta_p$
is a Dirac mass at a single point $p \in M$: if $M_{\rho,g}(u)=M_{\rho,g}(v)=c\delta_p$
with $u,v \in \K_0(M,g)$, then $M_{\rho,g}(\tilde{u})=e^{\tilde{u}} \delta_p$
and $M_{\rho,g}(\tilde{v})=e^{\tilde{v}} \delta_p$ with $\tilde{u}=u-u(p)+\log c$,
$\tilde{v}=v-v(p)+\log c$, so $\tilde{u}=\tilde{v}$ by uniqueness in Theorem \ref{thm:solutionMA},
which yields $u=v$.
\end{remark}

\subsection{Regularization of $g$-convex functions}

Let $(M,g)$ be a compact Hessian manifold and $\rho$ a $(-1)$-density.
Given a $g$-convex function $u$ on $M$, we set 
$\mu_u:=e^{-u} M_{\rho, g}[u]$ so that $u$ is the unique
$g$-convex solution of $M_{\rho, g}[u]=e^u \mu_u$.
Using convolutions we approximate $\mu_u$ by smooth volume forms
$\mu_{\e}=\mu \star \chi_{\e}+\e dV_M$
and invoke a result of Cheng-Yau \cite{CY82} to obtain a 
smooth strictly $g$-convex function $u_{\e}$ on $M$ such that
$$
M_{\rho, g}[u_{\e}]=e^{u_{\e}}\mu_{\e}.
$$

It follows from the stability  property (Theorem \ref{thm:solutionMA})
that $u_{\e}$ uniformly converges to $u$ as $\e \rightarrow 0^+$,
so any $g$-convex function $u$ is the uniform limit of smooth strictly
$g$-convex functions.
This provides an alternative proof of the global regularization of $g$-convex functions
(compare with Proposition \ref{prop:reg}).





\begin{thebibliography}{99}

 
 \bibitem[BK07]{BK07} Blocki, Z., Kolodziej, K.
{\em On regularization of plurisubharmonic functions on manifolds.}
 Proc. Amer. Math. Soc. 135 (2007), no. 7, 2089-2093.
 


\bibitem[CV01]{CV01} Caffarelli, L.A., Viaclovsky, J.A.
{\em On the regularity of solutions to Monge-Amp\`ere equations on hessian manifolds.}
Commun. Partial Differ. Equ. 26(11-12), 2339-2351 (2001) 

\bibitem[CY82]{CY82} Cheng, S.-Y., Yau, S.-T.
{\em On the real Monge-Amp\`ere equation and affine flat structures.}
 Proc. of the 1980 Beijing Symposium on Differential Geometry and Differential Equations, (1982), 339-370
 
 \bibitem[De89]{De89} Delanoë, P. {\em Remarques sur les vari\'et\'es localement hessiennes.}
  Osaka J. Math. 26, 65-69 (1989)
  
  \bibitem[Dem92]{Dem92} Demailly, J.-P.
  {\em  Regularization of closed positive currents and intersection theory. }
  J. Algebraic Geom. 1 (1992), no. 3, 361-409. 
  
  
  
   \bibitem[GW00]{GW00} Gross, M., Wilson, P. M. H. 
  {\em Large complex structure limits of K3 surfaces.}
   J. Differential Geom. 55 (2000), no. 3, 475-546.
   
   \bibitem[GZ17]{GZbook} Guedj, V., Zeriahi, A., 
   \emph{Degenerate complex {M}onge-{A}mp\`ere  equations},
    EMS Tracts in Mathematics, vol.~26, European Mathematical Society  (EMS), Z\"{u}rich, 2017. 

\bibitem[Gut01]{Gut} Gutiérrez, C.: {\it The Monge-Ampère equation}, 
Progress in Nonlinear Differential Equations and their Applications, 44. 
Birkh\"auser Boston, Inc., Boston, MA, 2001. xii+127 pp.

\bibitem[HO19]{HO19} Hultgren, J., \"Onnheim, M.
{\em An optimal transport approach to Monge-Amp\`ere equations on compact Hessian manifolds}.
 J. Geom. Anal. 29 (2019), no. 3, 1953-1990.

\bibitem[KS06]{KS06} Kontsevich, M.,  Soibelman, Y.   
{\em Affine   structures   and   non-Archimedean  analytic  spaces.}
 The  unity  of  mathematics,  321-385, Progr.Math., 244, Birkh\"auser Boston, Boston, MA (2006).


\bibitem[LYZ05]{LYZ05} Loftin, J., Yau, S.-T., Zaslow, E.
{\em Affine manifolds, SYZ geometry and the "Y'' vertex. .}
 J. Differential Geom. 71 (2005), no. 1, 129-158. 
 
 \bibitem[PT20]{PT20} S.Puechmorel, T.D.T\^o. 
{\em Convergence of the Hesse-Koszul flow on compact Hessian manifolds.}
 Preprint  arXiv:2001.02940 

\bibitem[RT77]{RT} Rauch, J., Taylor, B. A. 
{\em The Dirichlet problem for the multidimensional Monge-Amp\`ere equation.}
 Rocky Mountain J. Math. 7 (1977), no. 2, 345-364. 
 
 \bibitem[Sh07]{Sh07} Shima, H.
 {\em The Geometry of Hessian Structures}, 
 vol. 1. World Scientific, Singapore (2007) 
 
  \bibitem[SYZ]{SYZ96}  Strominger,A.; Yau,S.-T. Zaslow,E;
  {\em Mirror symmetry is T-duality.}
  Nucl.Phys. B (1996),243-259


\end{thebibliography}
\end{document}